\theoremstyle{plain}
\newtheorem{thm}{Theorem}[section]
\newtheorem{cor}[thm]{Corollary}
\newtheorem{lem}[thm]{Lemma}
\newtheorem{prop}[thm]{Proposition}
\theoremstyle{definition}
\newtheorem{defn}{Definition}[section]
\theoremstyle{remark}
\newtheorem{rem}{\bf Remark}[section]
\theoremstyle{remark}
\newtheorem{example}{\bf Example}
\newtheorem{com*}{\bf Comment}
\def \newequation#1#2{
   \@definecounter{#1}
   \@namedef{the#1}{\hbox{#2}}
   \@namedef{#1}{$$\refstepcounter{#1}}
   \@namedef{end#1}{
      \eqno \csname the#1\endcsname $$\global\@ignoretrue
      }
}
\def \newequation#1#2{
   \@definecounter{#1}
   \@namedef{the#1}{\hbox{#2}}
   \@namedef{#1}{$$\refstepcounter{#1}}
   \@namedef{end#1}{
      \eqno \csname the#1\endcsname $$\global\@ignoretrue
      }
   }
\def \newequation#1#2{
   \@definecounter{#1}
   \@namedef{the#1}{\hbox{#2}}
   \@namedef{#1}{$$\refstepcounter{#1}}
   \@namedef{end#1}{
      \eqno \csname the#1\endcsname $$\global\@ignoretrue
      }
   }
\def \newequation#1#2{
   \@definecounter{#1}
   \@namedef{the#1}{\hbox{#2}}
   \@namedef{#1}{$$\refstepcounter{#1}}
   \@namedef{end#1}{
      \eqno \csname the#1\endcsname $$\global\@ignoretrue
      }
   }
\def \newequation#1#2{
   \@definecounter{#1}
   \@namedef{the#1}{\hbox{#2}}
   \@namedef{#1}{$$\refstepcounter{#1}}
   \@namedef{end#1}{
      \eqno \csname the#1\endcsname $$\global\@ignoretrue
      }
   }
\tiny\textsl{2011}}
\title{Conditional hitting time estimation in  a nonlinear filtering model by the Brownian bridge method}
\author{ 
{\sc  Christophe Profeta} \thanks{F\'ed\'eration de Math\'ematiques d'Evry,  Laboratoire d'Analyse et de Probabilit\'es, 23 Boulevard de France, 91037 Evry,  
e-mail: {\tt christophe.profeta@univ-evry.fr} } \ \ \ 
{\sc  Abass Sagna} \thanks{F\'ed\'eration de Math\'ematiques d'Evry,  Laboratoire d'Analyse et de Probabilit\'es, 23 Boulevard de France, 91037 Evry,  \& ENSIIE, e-mail: {\tt abass.sagna@ensiie.fr}. Both author's researches  are supported by an AMaMeF exchange grant and the ``Chaire Risque de Cr\'{e}dit'' of the French Banking Federation.  } 
}
\date{}
\begin{document}

\maketitle

\begin{abstract}
The model consists of  a  signal process  $X$ which  is a general  Brownian diffusion process and an observation process $Y$, also a diffusion process,  which is supposed to be correlated to the signal process. We suppose that  the process $Y$ is observed  from time $0$ to $s>0$  at discrete times and aim to estimate, conditionally on these observations, the probability that the non-observed process $X$ crosses a  fixed barrier after a given time $t>s$. We formulate  this problem as  a usual  nonlinear filtering problem  and use optimal quantization and Monte Carlo simulations techniques to estimate the involved quantities.   
\end{abstract}

\section{Introduction}

We consider in this  work a nonlinear filtering model where the signal process $X$ and the observation process $Y$  evolve following the stochastic differential equations:

\begin{equation} \label{EqSignalProcess}
 \begin{cases}
 dX_t = b(X_t,t) dt + \sigma(X_t,t) dW_t,  &   X_0=x_0, \\
 dY_t = h(Y_t,X_t,t) dt + \nu(Y_t,t) dW_t +  \delta(Y_t,t) d\widetilde{W}_t, &  Y_0=y_0.
 \end{cases} 
 \end{equation}

\noindent
In these equations, $W$ and $\widetilde W$ are two independent  standard real valued Brownian motions.  We suppose that the functions $b$, $\sigma$, $h$, $\nu$, and $\delta$ are Lipschitz and that, for every $(x,t) \in  ]0,+\infty)^2$,  $\delta (x,t) > 0$, $\nu(x,t)>0$ and  $\sigma(x,t)>0$.    

\noindent
 Let  $ {\bf a}$ be a real number such that $0 <  {\bf a} < x_0$  and   let 
$$ \tau_{\bf a}^X = \inf\{ u \geq 0,  X_u \leq {\bf a}  \} $$
be  the first hitting time of the barrier ${\bf a}$ by the signal process.  As  usually, we consider that  $\inf \emptyset = + \infty$.    Our aim is to estimate the  distribution of the conditional hitting time

\begin{equation} \label{EqDisCondHitTime}
\mathds P \left(\tau_{\bf a}^X> t \vert\, \mathcal F_s^Y\right)
\end{equation} 
for $t \geq s>0$  and where  $(\mathcal F_t^Y)_{t \geq 0}$ is the filtration generated by the observation process $Y$: 
$$\mathcal F_s^Y = \sigma(Y_u, u \leq s).$$   More generally, we shall denote by $(\mathcal F_t^Z)_{t\geq0}$ the filtration generated by the process $Z$.
  
Such a problem arises for example in credit risk  when modeling a credit event in a structural model as  the first hitting time of a barrier ${\bf a}$ by  the firm value process $X$. Investors are supposed to have no access to the  true value of  the firm but only to the observation process  $Y$, which is  correlated  to  the value of the firm   (see e.g.  \cite{CocGemJea, DufLan}). We will typically suppose that we observe the process $Y$  at regular discrete times $t_0 = 0 <t_1< \ldots < t_m=s$  over the time interval $[0,s]$ and  intend for estimating the quantity
$$ \mathds P \left(\tau_{\bf a}^X> t \vert\, Y_{t_0},\ldots,Y_{t_m}\right)  $$
  for every $t \geq s$.   Note that, if $t < s$, then, applying the Markov property to the diffusion $Y$, the computations boil down to the case $s=t$.   In \cite{CalSag}, the quantity 
$$ \mathds P\left(\inf_{u \in [s,t]} X_u>{\bf a}\vert  Y_{t_0},\ldots,Y_{t_m}  \right) $$
has been estimated by a hybrid Monte Carlo-Optimal quantization method in the  case where the observation process dynamics is given by: 
 $$dY_t = Y_t\left(h (X_t) dt + \nu(t) dW_t +  \delta(t) d\widetilde{W}_t \right), \quad   Y_0=y_0,$$
where $\nu$ and $\delta$ are deterministic functions. However,  the approach used in the previous work   does  not apply to our framework because  we want to compute the conditional distribution of a function of the whole trajectory of the signal process  from $0$ to $t$ given the observations from $0$ to $s$, with $s \leq t$.\\

\noindent
\begin{example}\label{exa:BS}
 A particular case of model (\ref{EqSignalProcess}) one may consider is the following ``Black-Scholes'' case:
 
 \begin{equation}
\label{DinBS}
\begin{cases}
dX_t  =  X_t ( \mu dt  +  \sigma  dW_t), &  X_0 = x_0, \\
d Y_t  =  Y_t (r dt + \nu d W_t + \delta d \widetilde  W_t), & Y_0=y_0,
\end{cases}
\end{equation}
so that
\begin{equation}  \label{EqReturnYParticCase}
\frac{dY_t}{Y_t} =  \frac{\nu}{\sigma}\frac{dX_t}{X_t} + \big(r -\mu \frac{\nu}{\sigma} \big) dt + \delta d \widetilde{W}_t,
\end{equation}
or
\begin{equation}  \label{EqReturnYPartiCase1}
Y_t = \frac{y_0}{(x_0)^{\nu/\sigma}}  X_t^{\nu/\sigma} \exp\left(\delta \widetilde W_t + \left(r-\frac{\nu^2+\delta^2}{2}-\frac{\mu\nu}{\sigma} + \frac{\nu \sigma}{2} \right)t \right).
\end{equation}
Observe that  setting   $r = \mu$ and $\sigma=\nu$ yields 

$$  \frac{dY_t}{Y_t} =  \frac{dX_t}{X_t}  + \delta d \widetilde{W}_t, $$
 meaning that the return on $Y$ is  the return on $X$ affected by a noise (see  e.g.  \cite{CocGemJea}). \\
Of course, in this case, we may compute theoretically the expression  (\ref{EqDisCondHitTime}) by noticing that: 
\begin{align*}
 \tau_{\bf a}^X &= \inf\{ u \geq 0,  X_u \leq {\bf a}  \}\\
 &=  \inf\left\{ u \geq 0,\;  \delta \widetilde W_u -\frac{\delta^2}{2}u \geq    \ln(Y_u) + \ln\left(\frac{x_0}{y_0} \right) - \ln(\bf a) \right\}
  \end{align*}
and that, conditionally to $\mathcal F_s^Y$, the process $\left(\widetilde{W}_u, \; u\leq s\right)$ has the same law as 
 $$\left(  \frac{\sigma}{\sqrt{\sigma^2+\delta^2}}B_u+ \frac{\delta}{\sigma^2+\delta^2}\left(\ln(Y_u) +\ln\left(\frac{x_0}{y_0}\right)+\frac{\sigma^2+\delta^2}{2}u\right),\; u\leq s\right)$$
 where $B$ is a standard Brownian motion independent from the process $Y$.
 This follows from the fact that, since $W$ and $\widetilde W$ are two independent Brownian motion, so are
$$B=\frac{\sigma W + \delta \widetilde W}{\sqrt{\sigma^2 + \delta^2}}\quad \text{ and }\quad \widetilde B=\frac{\delta W - \sigma \widetilde W}{\sqrt{\sigma^2 + \delta^2}},$$
and from the relations :
$$B_t= \frac{1}{\sqrt{\sigma^2 + \delta^2}} \left(\ln(Y_t) +\ln\left(\frac{x_0}{y_0}\right)+\frac{\sigma^2+\delta^2}{2}t\right)\quad \text{ and }\quad \widetilde{W}_t = \frac{\delta B_t - \sigma \widetilde B_t}{\sqrt{\sigma^2 + \delta^2}}.$$
 Therefore, in this particular setting, the problem boils down to the  computation of the first passage time of a Brownian motion to a curved boundary. We refer to \cite{CocGemJea} for some similar, and far more general, considerations. 
\end{example}

\begin{example}
Another particularly simple case is given when both the signal and the observation process evolve following the Ornstein-Uhlenbeck dynamics:
\begin{equation}
\begin{cases}
dX_t  =  \lambda (\theta- X_t)  dt  +  \sigma  dW_t, &  X_0 = x_0, \\
d Y_t  =  \lambda ( \theta - Y_t)  dt + \sigma d W_t + \delta d \widetilde  W_t, & Y_0=y_0.
\end{cases}
\end{equation}
In this case, we have :
$$X_t= Y_t -(y_0- x_0) e^{-\lambda t} - \delta e^{-\lambda t} \int_0^t e^{\lambda u} d\widetilde W_u,$$ 
so that
$$\tau_{\bf a}^X = \inf\left\{ u \geq 0, \; Y_u- {\bf a}-(y_0- x_0) e^{-\lambda u}  \leq \delta e^{-\lambda u} \int_0^u e^{\lambda v} d\widetilde W_v  \right\}$$
and, conditionally to $\mathcal F_s^Y$, the process $\displaystyle \left( e^{-\lambda u} \int_0^u e^{\lambda v} d\widetilde W_v, \; u\leq s  \right)$ has the same law as 
$$\left(\frac{\sigma}{\sqrt{\sigma^2+\delta^2}} U_u + \frac{\delta e^{\lambda u}}{\sigma^2+\delta^2} \left( Y_u - y_0e^{-\lambda u}- \theta(1-e^{-\lambda u}) \right),\; u\leq s  \right)$$
where $U$ is an Ornstein-Uhlenbeck process with parameters $\lambda$ and 0 started from 0, and independent from $Y$.
This follows from Knight's representation theorem combined with the same ideas as above.\\
 \end{example}

The rest of the paper is organized as follows: in Section \ref{sec2}, we state and prove the main theorems, i.e. we give an approximation of the expectation (\ref{EqDisCondHitTime}) when $X$ is replaced by its continuous Euler scheme $\bar X$, see especially Subsection \ref{sec2.3}. Then, in Section 3, we introduce some numerical tools in order to compute the quantities involved. We finally conclude the paper by a few simulations.

\section{Estimation of the conditional survival probability}	\label{sec2}
	
	\subsection{Preliminaries  results}

To deal with the computation of the conditional hitting time (\ref{EqDisCondHitTime}), we introduce  the first hitting time from time $s$:
$$ \tau^{(s)} =\inf\{ u \geq s,   X_u \leq {\bf a}  \}.$$
Define furthermore the filtration  $(\mathcal G_t)_{t \geq 0}$  by, for every $t \geq 0$,
\begin{equation}\label{EqDefSigAlgG}
 \mathcal G_t = \sigma\{(W_u,\widetilde{W}_u), u \leq t  \}.
 \end{equation}

\noindent
We have the following result.

\begin{lem} \label{LemPreliminaries}
For every $t \geq s$,
\begin{eqnarray}  \label{EqLemPreliminaries}
\mathds P \big(\tau_{\bf a}^X > t \vert\, \mathcal F_s^Y\big)  &  = &  \mathds E \Big[  {1 \! \! 1}_{\{ \tau_{\bf a}^{X} >s \}}  F(s,t,X_s) \vert\, \mathcal F_s^Y\Big]
\end{eqnarray}
with 
$$   F(s,t,X_s)  = \mathds P\left( \inf_{u \in [s,t]} X_u >{\bf a } \vert X_s \right). $$
\end{lem}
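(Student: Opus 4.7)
The key algebraic observation is that the event $\{\tau_{\bf a}^X > t\}$ factorizes as $\{\tau_{\bf a}^X > s\} \cap \{\inf_{u \in [s,t]} X_u > {\bf a}\}$, since avoiding ${\bf a}$ on $[0,t]$ is the same as avoiding it on $[0,s]$ and then on $[s,t]$. The plan is to replace the indicator in the conditional probability by this product and then use the Markov property of $X$ to reduce the piece involving $[s,t]$ to the function $F(s,t,X_s)$.

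The main technical point is choosing the right intermediate $\sigma$-field for the tower property, because $X$ and $Y$ are driven by the same Brownian motion $W$ so we cannot naively claim independence of $X$ and $\mathcal F_s^Y$. The correct choice is $\mathcal G_s$ defined in \eqref{EqDefSigAlgG}. Indeed, $Y_u$ is a functional of $(W_v,\widetilde W_v)_{v \leq u}$, so $\mathcal F_s^Y \subset \mathcal G_s$, and by the tower property
\begin{equation*}
\mathds P(\tau_{\bf a}^X > t \mid \mathcal F_s^Y) = \mathds E\!\left[\mathds E\!\left[{1\!\!1}_{\{\tau_{\bf a}^X > s\}}\,{1\!\!1}_{\{\inf_{u \in [s,t]} X_u > {\bf a}\}} \,\big|\, \mathcal G_s\right] \,\Big|\, \mathcal F_s^Y\right].
\end{equation*}
The factor ${1\!\!1}_{\{\tau_{\bf a}^X > s\}}$ pulls out of the inner expectation because $X_{\cdot\wedge s}$, and hence this indicator, is $\mathcal G_s$-measurable.

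It remains to identify $\mathds E[{1\!\!1}_{\{\inf_{u \in [s,t]} X_u > {\bf a}\}} \mid \mathcal G_s]$ with $F(s,t,X_s)$. Writing the SDE for $X$ on $[s,t]$ in integral form with initial condition $X_s$, the path $(X_u)_{u \in [s,t]}$ is a deterministic functional of $X_s$ and the increments $(W_u - W_s)_{u \in [s,t]}$. Since these increments are independent of $\mathcal G_s$, while $X_s$ is $\mathcal G_s$-measurable, conditioning on $\mathcal G_s$ is equivalent to conditioning on $X_s$ alone, which yields $F(s,t,X_s)$ by the very definition of $F$. Plugging this back into the tower expression gives \eqref{EqLemPreliminaries}.

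The step I expect to require the most care is the reduction $\mathds E[\,\cdot\,\mid \mathcal G_s] = F(s,t,X_s)$: one must justify that the dependence of $\inf_{u\in[s,t]} X_u$ on $\mathcal G_s$ goes through $X_s$ only, which uses both the strong-solution/flow representation of the SDE for $X$ and the independence of future Brownian increments from $\mathcal G_s$. The rest is essentially bookkeeping with conditional expectations.
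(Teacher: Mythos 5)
Your proof is correct and follows essentially the same route as the paper's: the same factorization of $\{\tau_{\bf a}^X > t\}$, the same tower step through $\mathcal G_s$, and the same pull-out of the $\mathcal G_s$-measurable indicator. The only cosmetic difference is in the last step, where you appeal directly to the strong-solution/flow representation of $X$ on $[s,t]$ together with independence of future Brownian increments, whereas the paper phrases the same reduction as first discarding $\widetilde W$ (independence) to pass from $\mathcal G_s$ to $\mathcal F_s^W$ and then invoking the Markov property of $X$.
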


\begin{proof}[{\bf Proof}]
If $t\geq s$, one has
$$  \{ \tau_{\bf a}^X>t \} = \{\tau_{\bf a}^X > s   \} \cap \{ \tau^{(s)} >t\},$$
so that 
\begin{eqnarray*}
\mathds P \big(\tau_{\bf a}^X> t \vert\, \mathcal F_s^Y\big)  & =  &   \mathds P \big( \{\tau_{\bf a}^X > s   \} \cap \{ \tau^{(s)} >t\} \vert\, \mathcal F_s^Y\big) \\
&= &  \mathds E \left[ \mathds E\left[ {1 \! \! 1}_{\{ \tau_{\bf a}^X>s \}} {1 \! \! 1}_{\{ \tau^{(s)}>t \}}   \vert \mathcal G_s \right] \vert \mathcal{F}_s^Y  \right],
\end{eqnarray*}
where the $\sigma$-algebra $\mathcal G_s$ is defined in (\ref{EqDefSigAlgG}).
Hence, using the fact that $\widetilde{W}$ is independent from $W$  and the Markov property of the process $X$:  
\begin{eqnarray*}
\mathds P \big(\tau_{\bf a}^X> t \vert\, \mathcal F_s^Y\big)  & =  &   \mathds E \left[ {1 \! \! 1}_{\{ \tau_{\bf a}^X>s \}} \mathds E\left[  {1 \! \! 1}_{\{ \tau^{(s)}>t \}}   \vert \mathcal G_s \right] \vert \mathcal{F}_s^Y  \right] \\
& = &  \mathds E \left[ {1 \! \! 1}_{\{ \tau_{\bf a}^X>s \}} \mathds E\left[  {1 \! \! 1}_{\{ \tau^{(s)}>t \}}   \vert \mathcal F_s^W \right] \vert \mathcal{F}_s^Y  \right] \\
 & = & \mathds E \left[ {1 \! \! 1}_{\{ \tau_{\bf a}^X>s \}} \mathds P \big( \tau^{(s)}>t     \vert \mathcal F_s^X \big) \vert \mathcal{F}_s^Y  \right] \\
 & = & \mathds E \Big[ {1 \! \! 1}_{\{ \tau_{\bf a}^X>s \}} \mathds P \big( \inf_{u \in [s,t]} X_u >{\bf a}     \vert  X_s \big) \vert \mathcal{F}_s^Y  \Big].
\end{eqnarray*}
This shows assertion (\ref{EqLemPreliminaries}). 
\end{proof}

\noindent
Note that in general, there is no closed-form expression for computing $\displaystyle \mathds P \big( \inf_{u \in [s,t]} X_u >{\bf a} \vert  X_s \big)$, except in some very special cases, such as Brownian motion or Bessel processes... We shall therefore need an approximation of this expression, which is the purpose of the next subsection. 

\subsection{The continuous Euler scheme}

Let us  denote by $\bar X$ the continuous Euler scheme process  associated  to the signal process  $X$ and defined by 
$$
\bar{X}_s  = \bar{X}_{\underline{s}} + b(\bar{X}_{\underline{s}}, \underline{s}) (s - \underline{s}) + \sigma(\bar{X}_{\underline{s}}, \underline{s}) (W_{s} - W_{\underline{s}}), \quad  \bar{X}_0 = x_0,
$$
with $\underline{s} = t_k $ if $s \in [t_k,t_{k+1})$.
To estimate the distribution of the  conditional hitting time given in (\ref{EqDisCondHitTime}), consider  that we observe  the process $Y$ at discrete and regularly spaced  times: $t_0, t_1,\dots, t_m$, with   $0=t_0< \dots<t_m=s < t_{m+1}<\dots< t_n=t$.  In our general model (see Equation (\ref{EqSignalProcess})), the discrete time observation processes $\{\bar X_{t_k},\ k=0,\dots,n\}$ and  $\{Y_{t_k}, \  k=0,\dots,m\}$ are  obtained from Euler scheme as:
\setlength\arraycolsep{1pt}
\begin{equation}  
\begin{cases} 
 \bar X_{t_{k+1}} = \bar{X}_{t_k} +  b(\bar{X}_{t_k}, t_k) \Delta_k + \sigma(\bar X_{t_k},t_k) (W_{t_{k+1}}-W_{t_k})   \\
Y_{t_{k+1}} = Y_{t_k}  + h(Y_{t_k}, \bar X_{t_k}, t_k) \Delta_k + \nu(Y_{t_k},t_k) (W_{t_{k+1}}-W_{t_k}) + \delta(Y_{t_k},t_k) (\widetilde W_{t_{k+1}} -\widetilde W_{t_{k}})
 \end{cases}
 \end{equation} 
 \setlength\arraycolsep{2pt}
where $k=0,\dots,m-1$ for the observation process, $k$ going till $n-1$ for the signal process and $\Delta_k = t_{k+1}-t_{k}$. Note that if $t>s$, the number of discretization steps over $[0,s]$ may differ from the number of discretization steps over $[s,t]$ so that we choose  
\begin{equation*} 
  \left \{ \begin{array}{ll}
 t_k = \frac{ks}{m} & \textrm{ for }  k=0,\dots,m\\
 \\
 t_k = s+\frac{k(t-s)}{n}  &   \textrm{ for } k=m+1,\dots, n.
\end{array}  \right.
\end{equation*}

\noindent
 Supposing that  we have observed the trajectory $(Y_{t_0},\ldots,Y_{t_m})$ of the observation process  $Y$  we aim  to estimate 
$$ \mathds P(\tau_{\bf a}^X>t_n  \vert\,  Y_{t_0},\dots, Y_{t_m} ) $$ 
 by
\begin{equation} \label{EqDisCondHitTimeEstim}
\mathds P \big(\tau_{\bf a}^{\bar X} > t_n \vert\,  Y_{t_0},\dots, Y_{t_m}\big),
\end{equation} 
where 
$$ \tau_{\bf a}^{\bar X} = \inf\{ u \geq 0,  \bar{X}_u \leq {\bf a}  \}.$$
 To this end, we will use a useful result called   the regular Brownian bridge method. This result  recalled  below  allows us to compute  the distribution of the minimum (or the maximum) of the continuous Euler scheme $\bar{X}$ of the process $X$ over the time interval $[0,t]$, given its values at discrete and regular time observation points $0=t_0 < t_1 < \dots < t_n=t$ (see, e.g.  \cite{Gla, GobThese}).

\begin{lem} \label{LemBrowBridgeMethod} Let $(X_t)_{t \geq 0}$ be a diffusion process with dynamics given by 
$$ X_t = x  + \int_0^t b(X_u,u) du + \int_0^t \sigma(X_u,u) dW_u  $$
and let $(\bar{X}_t)_{t \geq 0}$ be its associated continuous Euler process. Then, the following equality in law holds:
\begin{equation} 
\mathcal{L} \Big(\min_{u \in [0,t]} \bar{X}_u \vert \bar{X}_{t_k} = x_k, k=0,\cdots,n \Big)   = \mathcal{L} \Big(\min_{k=0, \cdots,n-1} \left(H_{\Delta_k\sigma^2(x_k,t_k)}^{x_{k},x_{k+1}}\right)^{-1} (\Lambda_k) \Big),
\end{equation}
where $(\Lambda_k)_{k=0, \cdots,n-1}$ are $i.i.d.$ random variables uniformly distributed over the unit interval and $ \left(H_{\Delta_k\sigma^2(x_k,t_k)}^{x_{k},x_{k+1}}\right)^{-1} $  is the inverse function of the conditional cumulative function  $ H_{\Delta_k\sigma^2(x_k,t_k)}^{x_{k},x_{k+1}} $, defined by

\begin{equation} \label{DefCumulDistFunc}
H_{\Delta_k\sigma^2(x_k,t_k)}^{x_{k},x_{k+1}} (u) : =  \left \{ \begin{array}{ll}
\exp \Big( - \frac{2}{ \Delta_k  \sigma^2(x_k, t_k)}(u-x_k) (u-x_{k+1}) \Big) &   \textrm{if }  u \leq \min(x_k,x_{k+1})  \\
 1  &   \textrm{otherwise.}
\end{array}  \right.
\end{equation}
\end{lem}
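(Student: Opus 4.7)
The essential point is that, on each subinterval $[t_k, t_{k+1}]$, the continuous Euler scheme is simply a Brownian motion with constant drift and constant diffusion coefficient:
\begin{equation*}
\bar{X}_u \;=\; \bar{X}_{t_k} + b(\bar{X}_{t_k},t_k)(u-t_k) + \sigma(\bar{X}_{t_k},t_k)(W_u - W_{t_k}),\qquad u \in [t_k,t_{k+1}].
\end{equation*}
Therefore, conditionally on $\bar{X}_{t_k}=x_k$ and $\bar{X}_{t_{k+1}}=x_{k+1}$, the process $(\bar{X}_u)_{u \in [t_k,t_{k+1}]}$ is a Brownian bridge joining $x_k$ to $x_{k+1}$ over the interval of length $\Delta_k$ with infinitesimal variance $\sigma^2(x_k,t_k)$. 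The plan is to exploit this observation twice: first to split the global minimum into independent pieces, and then to identify the law of each piece.

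\noindent\textbf{Step 1: Decomposition.} I would begin by writing
\begin{equation*}
\min_{u\in[0,t]} \bar{X}_u \;=\; \min_{k=0,\dots,n-1}\ \min_{u \in [t_k,t_{k+1}]} \bar{X}_u
\end{equation*}
and argue that, conditionally on the $\sigma$-algebra generated by $(\bar{X}_{t_0},\dots,\bar{X}_{t_n})$, the random variables $M_k := \min_{u\in[t_k,t_{k+1}]}\bar{X}_u$ are mutually independent. This is immediate once one notes that, given the endpoints $\bar X_{t_k}$ and $\bar X_{t_{k+1}}$, the trajectory of $\bar X$ on $[t_k,t_{k+1}]$ depends only on the Brownian bridge built from the increments of $W$ between these two times; these bridges, obtained from increments of $W$ over disjoint subintervals, are independent.

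\noindent\textbf{Step 2: Brownian bridge minimum.} Next I would invoke the classical formula for the minimum of a Brownian bridge: if $(\beta_u)_{u \in [0,T]}$ is a Brownian bridge with variance parameter $\varsigma^2$ starting at $a$ and ending at $b$, then for every $u \leq \min(a,b)$,
\begin{equation*}
\mathds P\Big(\min_{v \in [0,T]} \beta_v \leq u\Big) \;=\; \exp\!\left(-\frac{2(a-u)(b-u)}{T \varsigma^2}\right),
\end{equation*}
and the probability equals $1$ for $u > \min(a,b)$. Applied with $a=x_k$, $b=x_{k+1}$, $T=\Delta_k$, $\varsigma^2=\sigma^2(x_k,t_k)$, this gives exactly that the conditional cumulative distribution function of $M_k$ is $H_{\Delta_k \sigma^2(x_k,t_k)}^{x_k,x_{k+1}}$ defined in \eqref{DefCumulDistFunc}.

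\noindent\textbf{Step 3: Inverse transform and conclusion.} Finally, since $H_{\Delta_k \sigma^2(x_k,t_k)}^{x_k,x_{k+1}}$ is (almost surely) the CDF of a continuous random variable, the standard inverse-transform representation yields
\begin{equation*}
M_k \;\stackrel{\text{(d)}}{=}\; \bigl(H_{\Delta_k \sigma^2(x_k,t_k)}^{x_k,x_{k+1}}\bigr)^{-1}(\Lambda_k)
\end{equation*}
with $\Lambda_k$ uniform on $[0,1]$, and the conditional independence established in Step 1 allows one to take the $\Lambda_k$'s to be i.i.d. Taking the minimum over $k=0,\dots,n-1$ then gives the claimed equality in law. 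The only step requiring genuine care is Step 1 (verifying the conditional independence of the consecutive bridges); Steps 2 and 3 are classical and purely computational.
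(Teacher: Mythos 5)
Your proof is correct and follows essentially the same route as the paper's own short proof: conditional independence of the per-interval minima via independent Brownian increments, followed by the classical Brownian bridge minimum law on each subinterval. You spell out the inverse-transform step that the paper leaves implicit, but the underlying argument is identical.
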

In the following, we shall replace the expression $\sigma(x_k,t_k)$ in the expression of $H$ by $\sigma_k$ and rather write: $ H_{\Delta_k\sigma_k^2}^{x_{k},x_{k+1}} $.

\begin{proof}[{\bf Short proof}]
Observe first that, conditionally to $\{\bar{X}_{t_k} = x_k;\; k=0,\cdots,n \}$,
the random variables  $\displaystyle \left\{\min_{u \in [t_k,t_{k+1}]} \bar{X}_u ;\; k=0,\ldots, n-1\right\}$ are mutually independent, thanks to the independent increments property of Brownian motion. Then, it suffices to notice that computing the law of $\displaystyle\min_{u \in [t_k,t_{k+1}]} \bar{X}_u $ conditionally to $\{ \bar X_{t_k}=x_k,\bar X_{t_{k+1}}=x_{k+1}\}$ amounts to computing the law of the hitting times of the bridge of a Brownian motion with drift. This law is well-known to be independent from the drift (i.e. from the function $b$ here) and to be given by an expression such as (\ref{DefCumulDistFunc}).
\end{proof}

 \noindent
 In the rest of the paper,  the function  $1-  H_{\Delta_k\sigma_k^2}^{x_{k},x_{k+1}}({\bf a}) $  will be often used. We shall denote it by   $G_{\Delta_k\sigma_k^2}^{x_{k},x_{k+1}}({\bf a})$, so that
 \begin{equation} \label{EqDefOfG}
   G_{\Delta_k\sigma_k^2}^{x_{k},x_{k+1}}({\bf a})= \left( 1-\exp\left(-  \frac{ 2 (x_k-{\bf a}) (x_{k+1}-{\bf a})}{ \Delta_k \sigma^2  (x_k, t_k) }\right) \right) {1 \! \! 1}_{\{x_k \geq {\bf a};\; x_{k+1} \geq  {\bf a}\} }.  
   \end{equation}
Now, in our set-up, we shall apply the Brownian bridge method to obtain the following lemma,  which is taken from \cite{CalSag}:

 \begin{lem} \label{LemProveThmMainResultPhi}
 We have 
 \begin{equation}
 \mathds P\left(\inf_{u \in [t_m,t_n]} \bar{X}_u>{\bf  a} \big \vert (\bar X_{t_k})_{k=0,\dots,m} \right) =   \prod_{k=m}^{n-1}  G_{\Delta_k\sigma_k^2}^{\bar{X}_{t_k},\bar{X}_{t_{k+1}}}({\bf a}). 
 \end{equation}
 \end{lem}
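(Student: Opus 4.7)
The plan is to combine Lemma \ref{LemBrowBridgeMethod} (the Brownian bridge method) with the decomposition of the infimum over $[t_m, t_n]$ as the minimum of the infima over the subintervals $[t_k, t_{k+1}]$, exploiting the conditional independence of these subinterval infima given the grid values.

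First, I would observe that
$$\Big\{\inf_{u \in [t_m,t_n]} \bar X_u > {\bf a}\Big\} = \bigcap_{k=m}^{n-1} \Big\{\inf_{u \in [t_k, t_{k+1}]} \bar X_u > {\bf a}\Big\}.$$
By the Markov property of the Euler scheme, I may enlarge the conditioning $\sigma$-algebra from $(\bar X_{t_k})_{k=0,\dots,m}$ to include the future grid values $(\bar X_{t_k})_{k=m+1,\dots,n}$. Lemma \ref{LemBrowBridgeMethod} then yields the mutual independence, conditionally on all grid values, of the random variables $\{\inf_{u \in [t_k, t_{k+1}]} \bar X_u : k = m, \dots, n-1\}$, which reflects the independent increments of $W$. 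The probability of the intersection therefore factorises as a product of the corresponding conditional probabilities.

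Second, I would identify each factor. Still by Lemma \ref{LemBrowBridgeMethod}, the conditional cumulative distribution function of $\inf_{u \in [t_k, t_{k+1}]} \bar X_u$ given $(\bar X_{t_k}, \bar X_{t_{k+1}}) = (x_k, x_{k+1})$ is $H_{\Delta_k \sigma_k^2}^{x_k, x_{k+1}}$. Hence the probability that this infimum exceeds ${\bf a}$ equals $1 - H_{\Delta_k \sigma_k^2}^{x_k, x_{k+1}}({\bf a})$ when ${\bf a} \leq \min(x_k, x_{k+1})$, and equals $0$ otherwise (the infimum being trivially bounded above by $\min(x_k, x_{k+1})$). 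Both cases are captured by the indicator ${1 \! \! 1}_{\{x_k \geq {\bf a};\; x_{k+1} \geq {\bf a}\}}$ that appears in the definition (\ref{EqDefOfG}) of $G_{\Delta_k \sigma_k^2}^{x_k, x_{k+1}}({\bf a})$.

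No step really presents a difficulty, since Lemma \ref{LemBrowBridgeMethod} already encapsulates the non-trivial computation (the law of the minimum of a Brownian bridge). The only point requiring a bit of care is the handling of the indicator in the definition of $G$, which must be checked against the trivial case where $\bar X_{t_k}$ or $\bar X_{t_{k+1}}$ falls below ${\bf a}$; once this is observed, the formula holds unconditionally thanks to the indicator.
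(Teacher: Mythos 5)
Your proof is correct and takes essentially the same route as the paper's: both rest entirely on Lemma \ref{LemBrowBridgeMethod}, combining the conditional independence of the per-interval minima of $\bar X$ given the grid values with the identification of each factor as $1 - H_{\Delta_k\sigma_k^2}^{\bar X_{t_k},\bar X_{t_{k+1}}}({\bf a}) = G_{\Delta_k\sigma_k^2}^{\bar X_{t_k},\bar X_{t_{k+1}}}({\bf a})$. The only cosmetic difference is that the paper passes through the $(\Lambda_k)$-representation of Lemma \ref{LemBrowBridgeMethod} and uses the monotonicity of $H$ to factor the probability, whereas you decompose the event $\bigl\{\inf_{u\in[t_m,t_n]}\bar X_u > {\bf a}\bigr\}$ into subinterval events and read each factor off the bridge cdf directly, handling the indicator in (\ref{EqDefOfG}) explicitly; the underlying content is identical, and you deal with the conditioning on the future grid points $(\bar X_{t_k})_{k>m}$ at least as carefully as the paper does.
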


\begin{proof}[{\bf Proof.}]  
It follows from Lemma \ref{LemBrowBridgeMethod}  that 
\begin{eqnarray*}
 \mathds P\left(\inf_{u \in [t_m,t_n]} \bar{X}_u> a  \big\vert (\bar X_{t_k})_{k=0,\dots,m} \right)  & = &\mathbb P \left(  \min_{k=m,\cdots,n-1}\left(H_{\Delta_k\sigma_k^2}^{\bar{X}_{t_k},\bar{X}_{t_{k+1}}}\right)^{-1} (\Lambda_k)  > {\bf a}  \right) . \nonumber
\end{eqnarray*}
Since the function $H_{\Delta_k\sigma_k^2}^{z_{k},z_{k+1}}(\cdot) $ is non-decreasing and the $\Lambda_{k}$'s are i.i.d. uniformly distributed random variables, this reduces to:
\begin{eqnarray*}
\mathbb P \left( \min_{ u \in [t_m,t_n]} \bar X_u>a \big \vert (\bar X_{t_k})_{k=0,\dots,m} \right) & = &
 \prod_{k=m}^{n-1} \mathbb P \left(  \Lambda_{k}   \geq  H_{\Delta_k\sigma_k^2}^{\bar{X}_{t_k},\bar{X}_{t_{k+1}}}({\bf a})   \right)  \\
& = &  \prod_{k=m}^{n-1} \left(1-H_{\Delta_k\sigma_k^2}^{\bar{X}_{t_k},\bar{X}_{t_{k+1}}}({\bf a})  \right).
\end{eqnarray*}
This  completes the proof.
\end{proof}

\subsection{Main theorem}\label{sec2.3}

We now state and prove the main theorems of this section. We first show that the conditional hitting time of the continuous Euler process $\bar X$ given the discrete path observations $Y_{t_0}, \dots, Y_{t_m}$  may be written as an expectation of an explicit functional of the discrete path $\bar X_{t_0}, \dots, \bar X_{t_m}$ of the signal given the observations  $Y_{t_0}, \dots, Y_{t_m}$. Therefore, we reduce the initial problem to the characterization of the conditional distribution of  $\bar X_{t_0}, \dots, \bar X_{t_m}$  given   $Y_{t_0}, \dots, Y_{t_m}$.  \\

\begin{thm}  \label{ThmMainResultPhi}
Set  $\bar{X}^m= (\bar{X}_{t_0},\dots,\bar{X}_{t_m})$ and $Y^m=(Y_{t_0},\dots,Y_{t_m})$.   We have:
\begin{equation}   \label{EqForFixedObservation}
\mathds P\big(  \tau_{\bf a}^{\bar X} > t_n \vert  \,  Y_{t_0},\dots, Y_{t_m} \big)  = \Psi(Y_{t_0},\dots,Y_{t_m}),
\end{equation}
where for $y^m:=(y_0,\dots,y_{m})$, 
\begin{equation} \label{EqForFixedObservationUm}
\Psi (y^m) = \mathds E\big[ \Phi \big(\bar{X}^m\big) \big \vert Y^m=y^m\big].
\end{equation} 
The function $\Phi$ is defined for every   $z^{m}=(z_0,\dots,z_m)$ by
  $$ \Phi(z^m) = \bar{F}(t_m,t_n,z_m)\prod_{k=0}^{m-1}  G_{\Delta_k\sigma_k^2}^{z_{k},z_{k+1}}({\bf a}),$$
with 
$$ \bar{F}(t_m,t_n,x)  =  \mathds E \left[ \prod_{k=m}^{n-1}   G_{\Delta_k\sigma_k^2}^{\bar{X}_{t_k},\bar{X}_{t_{k+1}}}({\bf a}) \big\vert\, \bar{X}_{t_m}=x  \right]. $$

\end{thm}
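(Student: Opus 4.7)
The plan is to mimic the decomposition of Lemma~\ref{LemPreliminaries} combined with the Brownian bridge representation provided by Lemmas~\ref{LemBrowBridgeMethod} and \ref{LemProveThmMainResultPhi}. I would first observe that
$$\{ \tau_{\bf a}^{\bar X} > t_n \} = \Big\{ \inf_{u \in [0,t_m]} \bar X_u > {\bf a}\Big\} \cap \Big\{ \inf_{u \in [t_m,t_n]} \bar X_u > {\bf a}\Big\},$$
so that the whole indicator factorizes naturally into a ``past'' part (depending on the interval $[0,t_m]$ over which the observations $Y_{t_0},\ldots,Y_{t_m}$ have been recorded) and a ``future'' part (depending on $[t_m,t_n]$).

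Next, I would enrich the conditioning by the discrete skeleton $\bar X^n=(\bar X_{t_0},\ldots,\bar X_{t_n})$ and apply the Brownian bridge lemma (Lemma~\ref{LemBrowBridgeMethod}) on the full grid: conditionally on $\bar X^n$, the minima $\min_{u\in[t_k,t_{k+1}]}\bar X_u$ are mutually independent and have the explicit law encoded in $H_{\Delta_k\sigma_k^2}^{\bar X_{t_k},\bar X_{t_{k+1}}}$. Using the definition of $G$, this yields
$$\mathds P\Big(\tau_{\bf a}^{\bar X}>t_n \,\big|\, \bar X^n, Y^m\Big) = \prod_{k=0}^{n-1} G_{\Delta_k\sigma_k^2}^{\bar X_{t_k},\bar X_{t_{k+1}}}({\bf a}),$$
because, conditionally on $\bar X^n$, the Brownian bridges driving $\bar X$ between successive grid points are independent of the observations $Y^m$ (these bridges are functionals of the $W$-increments orthogonal to the $\sigma$-algebra generated by $\{W_{t_k}\}_{k\le n}$ and by $\widetilde W$, and $Y^m$ is measurable with respect to this latter $\sigma$-algebra).

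From here I would split the product at index $m$ and take the tower $\mathds E[\,\cdot\,|Y^m]=\mathds E[\mathds E[\,\cdot\,|\bar X^m,Y^m]\,|Y^m]$. The first $m$ factors $\prod_{k=0}^{m-1} G_{\Delta_k\sigma_k^2}^{\bar X_{t_k},\bar X_{t_{k+1}}}({\bf a})$ are $\sigma(\bar X^m)$-measurable and can be pulled out of the inner expectation. For the remaining factors $\prod_{k=m}^{n-1} G_{\Delta_k\sigma_k^2}^{\bar X_{t_k},\bar X_{t_{k+1}}}({\bf a})$, the Markov property of the Euler scheme together with the independence of $\widetilde W$ from the future $W$-increments $(W_u-W_{t_m})_{u\ge t_m}$ shows that, conditionally on $(\bar X^m,Y^m)$, the law of $(\bar X_{t_{m+1}},\ldots,\bar X_{t_n})$ depends only on $\bar X_{t_m}$. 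This yields exactly $\bar F(t_m,t_n,\bar X_{t_m})$, so combining everything one gets $\mathds P(\tau_{\bf a}^{\bar X}>t_n\mid Y^m)=\mathds E[\Phi(\bar X^m)\mid Y^m]$, and a standard measurable selection argument writes the latter as $\Psi(Y^m)$.

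The main obstacle I anticipate is the careful bookkeeping of $\sigma$-algebras required in the second and third steps: one must justify simultaneously that the Brownian bridges inside each interval $[t_k,t_{k+1}]$ are independent of $Y^m$ given the skeleton $\bar X^n$, and that the future $W$-increments past $t_m$ are independent of $Y^m$ given $\bar X_{t_m}$. These facts rely crucially on the independence of $W$ and $\widetilde W$, and on the fact that $Y^m$ is built only from increments of $W$ and $\widetilde W$ over $[0,t_m]$; writing out explicitly the Gaussian decomposition of $W$ into its values at the grid and the intra-interval bridges makes this transparent but slightly technical.
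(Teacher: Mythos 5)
Your proof is correct and relies on the same key ingredients as the paper's: the Brownian bridge representation (Lemmas~\ref{LemBrowBridgeMethod}--\ref{LemProveThmMainResultPhi}), the Markov property of the Euler scheme, and the independence of $W$ and $\widetilde W$. The only difference is organizational: the paper first mirrors Lemma~\ref{LemPreliminaries} to write the conditional probability as $\mathds E[\mathds 1_{\{\tau_{\bf a}^{\bar X}>t_m\}}\bar F(t_m,t_n,\bar X_{t_m})\mid Y^m]$ and then handles each factor via successive conditioning, whereas you condition on the full skeleton $\bar X^n$ together with $Y^m$ in one step, read off the full product $\prod_{k=0}^{n-1}G_{\Delta_k\sigma_k^2}^{\bar X_{t_k},\bar X_{t_{k+1}}}({\bf a})$, and split it at $k=m$ afterwards --- a slightly more unified bookkeeping that yields the same conclusion.
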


\begin{proof}[{\bf Proof}]
 We may show  similarly to  Equation (\ref{EqLemPreliminaries})  in Lemma \ref{LemPreliminaries} that 
$$  \mathds P \big(\tau_{\bf a}^{\bar X} > t_n  \big\vert\, Y_{t_0},\dots,Y_{t_m}\big)  = \mathds E \left[  {1 \! \! 1}_{\{ \tau_{{\bf a}}^{\bar X} >t_m \}}  \bar{F}(t_m,t_n,\bar{X}_{t_m})\vert\, Y_{t_0},\dots,Y_{t_m} \right], $$
with  
$$\bar{F}(t_m,t_n,\bar{X}_{t_m}) = \mathds P\left(\inf_{u \in [t_m,t_n]} \bar{X}_u> {\bf a}\big \vert \bar{X}_{t_m} \right).$$
It follows from Lemma \ref{LemProveThmMainResultPhi}  that
$$ \bar{F}(t_m,t_n,\bar{X}_{t_m})  =  \mathds E\left[ \mathbb P \left( \min_{ u \in [t_m,t_n]} \bar X_u>{\bf a} \big \vert (\bar X_{t_k})_{k=0,\dots,m} \right)    \big \vert \bar{X}_{t_m}\right]   =\mathds E \left[ \prod_{k=m}^{n-1} G_{\Delta_k\sigma_k^2}^{\bar{X}_{t_k},\bar{X}_{t_{k+1}}}({\bf a}) \big\vert\, \bar{X}_{t_m}  \right].$$
On the other hand, combining  a  successive conditioning rule with the independence between $W$ and $\widetilde W$  gives :
\begin{eqnarray*}
  \mathds P \big(\tau_{\bf a}^{\bar{X}} > t_n  \vert\, Y_{t_0},\dots,Y_{t_m}\big)  & = &  \mathds E \left[ \mathds E \left[  {1 \! \! 1}_{\{ \tau_{\bf a}^{\bar X} >t_m \}}  \bar{F}(t_m,t_n,\bar{X}_{t_m})\vert   (W_{t_k},\widetilde W_{t_k})_{k=0,\dots,m} \right] \vert\, Y^m \right] \\
  & = &  \mathds E \left[ \mathds E \left[  {1 \! \! 1}_{\{ \tau_{\bf a}^{\bar X} >t_m \}}  \bar{F}(t_n,t_m,\bar{X}_{t_m}) \vert (W_{t_k})_{k=0,\dots,m} \right] \vert\, Y^m \right] \\
  & = &  \mathds E \left[  \bar{F}(t_m,t_n,\bar{X}_{t_m})  \mathds P\left( \tau_{ {\bf a}}^{\bar X} >t_m  \vert (W_{t_k})_{k=0,\dots,m} \right) \vert\, Y^m \right] \\
   & = &  \mathds E \left[  \bar{F}(t_m,t_n,\bar{X}_{t_m})  \mathds P\left( \tau_{ {\bf a}}^{\bar X} >t_m  \vert (\bar X_{t_k})_{k=0,\dots,m} \right) \vert\, Y^m \right].
\end{eqnarray*}
 Now,  using   Lemma \ref{LemProveThmMainResultPhi} once again, we have
  $$  \mathds P\left( \tau_{ {\bf a}}^{\bar X} >t_m   \vert (\bar X_{t_k})_{k=0,\dots,m} \right)  = \prod_{k=0}^{m-1}  G_{\Delta_k\sigma_k^2}^{\bar{X}_{t_k},\bar{X}_{t_{k+1}}}({\bf a}) $$
  so that 
  $$  \mathds P \big(\tau_{\bf a}^{\bar X} > t_n  \vert\, Y_{t_0},\dots,Y_{t_m}\big) = \mathds E\left[ \Phi(\bar X^m) \,\vert \, Y_{t_0},\dots,Y_{t_m} \right] $$
  where  for every $x^m:=(x_0,\dots,x_m)$,
$$ \Phi(x^m) = \bar{F}(t_m,t_n,x_m)\prod_{k=0}^{m-1}  G_{\Delta_k\sigma_k^2}^{x_{k},x_{k+1}}({\bf a}) .$$
This completes the proof.
\end{proof}

It remains now to characterize the law of $\bar{X}^m \vert Y^m=y^m$ appearing in (\ref{EqForFixedObservationUm}). This is the purpose of the following theorem, in which we shall write the conditional survival  probability  in a usual  form with respect to  standard nonlinear filtering problems. \\
\noindent
We set from now on $\sigma_k = \sigma(x_k, t_k),  b_k = b(x_k, t_k), \delta_k=\delta(y_k,t_k), h_k =h(y_k,x_k, t_k)$, and $\nu_k = \nu(y_k,t_k)$. We next give the main result of the paper.

\begin{thm}  \label{ThmMainResultPhiNLF}
We have:
\begin{equation}   \label{EqForFixedObservationNLF}
\mathds P\big(  \tau_{\bf a}^{\bar X} > t_n \vert  \,  Y_{t_0},\dots, Y_{t_m} \big)  = \Psi(Y_{t_0},\dots,Y_{t_m}),
\end{equation}
where for  $y^m:=(y_0,\dots,y_{m})$, 
\begin{equation}  \label{EqExplicitCompSurvivalProba}
\Psi (y^m) = \frac{\mathds E \big[\bar{F}(t_m,t_n,\bar X_{t_m}) K^{m} L_y^m \big] }{ \mathds E[L_y^m]},
\end{equation}
with 
$$   K^{m} =  \prod_{k=0}^{m-1}  G_{\Delta_k\sigma_k^2}^{\bar{X}_{t_k},\bar{X}_{t_{k+1}}}({\bf a})  \quad \textrm{and} \quad L_y^m =  \prod_{k=0}^{m-1} g_k(\bar X_{t_k},y_k;\bar X_{t_{k+1}},y_{k+1}).  $$
The function $g_k$  is defined by   
\begin{equation}  \label{EqDefFunctg_k}
 g_k(x_k,y_k;x_{k+1},y_{k+1}) =   \frac{1}{(2\pi \Delta_k)^{3/2} \sigma^2_k  \delta_k  }  \exp\Bigg(  -     \frac{\nu_k^2}{2 \delta^2_k \Delta_k} \Big( \frac{x_{k+1}  - m_k^1}{\sigma_k } -   \frac{y_{k+1} -   m_k^2}{\nu_k}  \Big)^2 \Bigg)  
\end{equation}

\noindent with  $m_k^1 := x_k + b_k \Delta_k\;$ and $\;m_k^2 := y_k + h_k  \Delta_k$.
\end{thm}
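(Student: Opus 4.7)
The starting point is Theorem~\ref{ThmMainResultPhi}, which reduces the problem to expressing the conditional expectation $\mathds E[\Phi(\bar X^m)\,|\,Y^m=y^m]$ in the explicit ratio form of~(\ref{EqExplicitCompSurvivalProba}). The natural tool is Bayes' formula for conditional expectations: provided the pair $(\bar X^m,Y^m)$ admits a joint density with respect to Lebesgue measure on $\mathds R^{2(m+1)}$, one has
$$\mathds E[\Phi(\bar X^m)\,|\,Y^m=y^m]=\frac{\mathds E\bigl[\Phi(\bar X^m)\,q(y^m\,\vert\,\bar X^m)\bigr]}{\mathds E\bigl[q(y^m\,\vert\,\bar X^m)\bigr]},$$
where $q(y^m\,\vert\,x^m)$ is any version of the conditional density of $Y^m$ given $\bar X^m=x^m$; any multiplicative constant in $q$ depending only on the fixed parameters $\Delta_k,\sigma_k,\delta_k$ cancels between numerator and denominator.

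To implement this, I would compute the one-step joint transition density of $(\bar X_{t_{k+1}},Y_{t_{k+1}})$ given $(\bar X_{t_k},Y_{t_k})$. By the Euler recursion this pair is an invertible affine image of the Gaussian increment $(W_{t_{k+1}}-W_{t_k},\widetilde W_{t_{k+1}}-\widetilde W_{t_k})$, with Jacobian $\sigma_k\delta_k\neq 0$ under the standing non-degeneracy assumption $\sigma_k,\delta_k>0$. A change of variables produces a bivariate Gaussian density which I would factor as
$$p(x_{k+1},y_{k+1}\,\vert\,x_k,y_k)=p_X(x_{k+1}\,\vert\,x_k)\,q_k(y_{k+1}\,\vert\,x_k,y_k,x_{k+1}),$$
where $p_X$ is the $N(m_k^1,\sigma_k^2\Delta_k)$ transition density of the Euler scheme and $q_k$ is the conditional density of $Y_{t_{k+1}}$. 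The latter is read off directly: once $x_{k+1}-m_k^1$ is specified, the increment $W_{t_{k+1}}-W_{t_k}=(x_{k+1}-m_k^1)/\sigma_k$ is determined, and the only randomness remaining in $Y_{t_{k+1}}$ is the independent Gaussian $\delta_k(\widetilde W_{t_{k+1}}-\widetilde W_{t_k})$. Hence $Y_{t_{k+1}}$ is $N\bigl(m_k^2+\nu_k(x_{k+1}-m_k^1)/\sigma_k,\;\delta_k^2\Delta_k\bigr)$, and its density reproduces exactly the quadratic form in the exponent of~(\ref{EqDefFunctg_k}).

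Chaining over $k=0,\ldots,m-1$ by the Markov property of the pair $(\bar X,Y)$, the product $L_y^m=\prod_k g_k(\bar X_{t_k},y_k;\bar X_{t_{k+1}},y_{k+1})$ is, up to a constant depending only on $(\Delta_k,\sigma_k,\delta_k)$, a version of the conditional density of $(Y_{t_1},\ldots,Y_{t_m})$ given $\bar X^m$ evaluated at $(y_1,\ldots,y_m)$. Bayes' formula then yields
$$\mathds E[\Phi(\bar X^m)\,|\,Y^m=y^m]=\frac{\mathds E\bigl[\Phi(\bar X^m)\,L_y^m\bigr]}{\mathds E\bigl[L_y^m\bigr]},$$
and substituting $\Phi(\bar X^m)=\bar F(t_m,t_n,\bar X_{t_m})\,K^m$ from Theorem~\ref{ThmMainResultPhi} gives the claim. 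The main technical point is the careful bookkeeping of the Gaussian change of variables — in particular, recognising the exponent of $g_k$ as the correctly reparametrised conditional variance of $Y_{t_{k+1}}$ and verifying that the normalisation constants attached to the chosen form of $g_k$ cancel cleanly in the Bayes ratio.
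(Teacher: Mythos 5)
Your proposal follows essentially the same route as the paper: reduce to $\mathds E[\Phi(\bar X^m)\mid Y^m=y^m]$ via Theorem~\ref{ThmMainResultPhi}, exploit the Markov property of the pair $(\bar X_{t_k},Y_{t_k})$ to chain one-step transition kernels, factor the bivariate Gaussian kernel into the $\bar X$-marginal transition $P_k$ times a residual $g_k$, and apply Bayes' formula to obtain the ratio $\mathds E[\Phi\,L_y^m]/\mathds E[L_y^m]$. What you do differently is a pleasant simplification of the middle step: rather than writing out the full $2\times 2$ Gaussian density $f_k$ with covariance $\Sigma_k$ and then dividing by $P_k$ (which is what the paper does, requiring a completion of the square), you observe directly that once $\bar X_{t_{k+1}}=x_{k+1}$ is fixed, the Brownian increment $W_{t_{k+1}}-W_{t_k}$ is pinned down, so $Y_{t_{k+1}}$ conditionally on $(\bar X_{t_k},Y_{t_k},\bar X_{t_{k+1}})$ is a one-dimensional Gaussian $N\bigl(m_k^2+\nu_k(x_{k+1}-m_k^1)/\sigma_k,\;\delta_k^2\Delta_k\bigr)$, whose density reproduces the exponent of~(\ref{EqDefFunctg_k}) immediately. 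This is the same object computed more economically.

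One point deserves sharper wording. Your remark that ``any multiplicative constant in $q$ depending only on $\Delta_k,\sigma_k,\delta_k$ cancels'' is only safe because the $g_k$ you actually derive carries the prefactor $(2\pi\delta_k^2\Delta_k)^{-1/2}$, which depends solely on the observed $y_k$ (through $\delta_k=\delta(y_k,t_k)$) and on the fixed $\Delta_k$ — hence genuinely constant under the expectation over $\bar X^m$. Had you instead kept the literal prefactor written in~(\ref{EqDefFunctg_k}), which involves $\sigma_k^2=\sigma(\bar X_{t_k},t_k)^2$, that factor would not cancel in the Bayes ratio whenever $\sigma$ is non-constant, since $\bar X_{t_k}$ is an integration variable. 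So state explicitly that the constant you are dropping is $(2\pi\delta_k^2\Delta_k)^{-1/2}$, and that it is admissible precisely because $\delta_k$ depends on the conditioning data rather than on the integration variables. With that caveat made precise, the argument is complete and correct.
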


\begin{proof}[{\bf Proof}]
It follows from Theorem \ref{ThmMainResultPhi}  that
$$  \mathds P \big(\tau_{\bf a}^{\bar X} > t_n  \vert\, Y_{t_0},\dots,Y_{t_m}\big) = \mathds E\left[ \Phi(\bar X^m) \,\vert \, Y_{t_0},\dots,Y_{t_m} \right] $$
  where  for every $x^m:=(x_0,\dots,x_m)$,
$$ \Phi(x^m) = \bar{F}(t_m,t_n,x_m)\prod_{k=0}^{m-1}  G_{\Delta_k\sigma_k^2}^{x_{k},x_{k+1}}({\bf a}) .$$
It remains to characterize the conditional distribution of $\bar X^m$ given $Y^m$. Recall  that the dynamics of  the processes $(\bar X_{t_k})$ and $(Y_{t_k})$ are given  for $k =0,\dots,m$  by
%
  \setlength\arraycolsep{1pt}
\begin{equation}  
\begin{cases} 
 \displaystyle \bar X_{t_{k+1}} = \bar{X}_{t_k} +  b(\bar{X}_{t_k}, t_k) \Delta_k + \sigma(\bar X_{t_k},t_k) (W_{t_{k+1}}-W_{t_k})   \\
\displaystyle Y_{t_{k+1}} = Y_{t_k}  + h(Y_{t_k}, \bar X_{t_{k}},t_k) \Delta_k + \nu(Y_{t_k},t_k) (W_{t_{k+1}}-W_{t_k}) + \delta(Y_{t_k},t_k) (\widetilde W_{t_{k+1}} -\widetilde W_{t_{k}})
 \end{cases}  
 \end{equation} 
 \setlength\arraycolsep{2pt}

 \noindent 
Let $\mathds P_{x_{k},y_{k}}(\bar X_{t_{k+1}} \in dx_{k+1}, Y_{t_{k+1}} \in dy_{k+1})$ denote the density function of the couple $(\bar X_{t_{k+1}}, Y_{t_{k+1}})$ given $(\bar X_{t_k}, Y_{t_k}) = (x_k,y_k)$.   Using  Bayes formula and the Markov property of the process $(\bar X_{t_k},Y_{t_k})_k$ we get:
\begin{eqnarray*}
& & \mathds P_{x_0,y_0}  (  \bar X_{t_1} \in dx_1, \dots, \bar X_{t_m} \in dx_m \vert Y_{t_1} \in dy_1, \dots,Y_{t_m}  \in dy_m) \\
&   &  \qquad  \quad =   \frac{\mathds P_{x_0,y_0} \big((\bar X_{t_1} \in dx_1,Y_{t_1} \in dy_1); \dots; (\bar X_{t_m} \in dx_m,Y_{t_m} \in dy_m)   \big)}{ \mathds P_{x_0,y_0} (Y_{t_1} \in dy_1, \dots, Y_{t_m} \in dy_m)} \\
&   &  \qquad  \quad =    \frac{\mathds P_{x_0,y_0} \big(\bar X_{t_1} \in dx_1,Y_{t_1} \in dy_1) \times  \dots \times \mathds P_{x_{m-1},y_{m-1}} (\bar X_{t_m} \in dx_m,Y_{t_m} \in dy_m)}{ \mathds P_{x_0,y_0} (Y_{t_1} \in dy_1, \dots, Y_{t_m} \in dy_m)}. 
\end{eqnarray*}
For  every $x^m = (x_0,\dots,x_m)$ and $y^m=(y_0,\dots,y_m)$, set  :
$$  N_k(x^m; y^m) := \mathds P_{x_0,y_0} \big(\bar X_{t_1} \in dx_1,Y_{t_1} \in dy_1) \times  \dots \times \mathds P_{x_{m-1},y_{m-1}} (\bar X_{t_m} \in dx_m,Y_{t_m} \in dy_m). $$
With this notation, the denominator reads:
$$  \mathds P_{x_0,y_0} (Y_{t_1} \in dy_1, \dots, Y_{t_m} \in dy_m) = \int   N_k(x^m; y^m)  dx_1\ldots dx_m.$$

\noindent
On the other hand,  the random vector $ (\bar X_{t_{k+1}}, Y_{t_{k+1}}) \big  \vert (\bar X_{t_k},Y_{t_k}) = (x_k,y_k)$ has a Gaussian distribution with mean $m_{k}$ and covariance matrix $\Sigma_k$ given for every $k=0,\dots,m-1$  by  
 \begin{equation} \label{DefmkSigk} 
m_k =  \begin{pmatrix}
  x_k + b_k  \Delta_k   \\
y_k + h_k   \Delta_k\\ 
\end{pmatrix}, 
\qquad   \ 
\Sigma_k =  \Delta_k \begin{pmatrix}
\sigma^2_k  & \sigma_k \nu_k  \\ 
 \sigma_k \nu_k   & \quad  \nu^2_k + \delta^2_k
\end{pmatrix}.
\end{equation}
Then, for every $k=0,\dots,m-1$,  the density  $f_k(x_k,y_k;x_{k+1}, y_{k+1})$ of  $ (\bar X_{t_{k+1}}, Y_{t_{k+1}}) \big  \vert (\bar X_{t_k},Y_{t_k}) = (x_k,y_k)$  reads 
\begin{align}
f_k(x_k,y_k;x_{k+1},y_{k+1})   =   \frac{1}{2\pi \sigma_k  \delta_k  \Delta_k}  \exp\Bigg( & -  \frac{\nu^2_k +\delta^2_k}{2 \delta^2_k \Delta_k} \Big\{  \frac{ (x_{k+1}  - m_k^1)^2}{\sigma^2_k }   +    \frac{(y_{k+1} -   m_k^2)^2}{\nu^2_k + \delta^2_k}  \nonumber \\
&  -   \frac{2 \nu_k}{\sigma_k (\nu^2_k + \delta^2_k) } (x_{k+1}  - m_k^1)(y_{k+1} - m_k^2) \Big\} \Bigg) ,
\end{align}
where  $m_k^1 := x_k +b_k  \Delta_k\;$ and $\;m_k^2 := y_k + h_k  \Delta_k$.  As a consequence,
\begin{eqnarray}
\notag \mathds P \left(\tau_{\bf a}^{\bar X} > t_n  \vert\, (Y_{t_0},\dots,Y_{t_m}) =  y^m\right) & = &  \mathds E\left[ \Phi(\bar X^m) \,\vert \, (Y_{t_0},\dots,Y_{t_m}) =y^m \right] \\
\label{eqintphi} & = & \frac{\int \Phi(x^m)  N_k(x^m; y^m)  dx_1\ldots dx_m}{\int N_k(x^m; y^m)  dx_1\ldots dx_m}.
\end{eqnarray}

\noindent
Now, we know that the random variable  $\bar X_{t_{k+1}} \vert \bar{X}_{t_k} =x_{k}$ has a Gaussian distribution with mean $m_k^1$ and variance $\sigma^2_k  \Delta_k$.  Its  density  $P_k(x_k,x_{k+1})dx_{k+1}$ is therefore given by : 
$$  P_k(x_k,x_{k+1})dx_{k+1}  =  \frac{1}{\sqrt{2 \pi \Delta_k}\, \sigma_k } \exp \Big(-  \frac{(x_{k+1} - m_k^1)^2}{2 \sigma^2_k \Delta_k } \Big) dx_{k+1}.$$
Then, by definition of $N_k$, we may write:
\begin{align*}
&\int N_k(x^m; y^m)  dx_1\ldots dx_m\\
 =& \int \prod_{k=0}^{m-1}  f_k(x_k,y_k;x_{k+1}, y_{k+1}) dx_{k+1}\\
=& \int \prod_{k=0}^{m-1}  \frac{f_k(x_k,y_k;x_{k+1}, y_{k+1})}{P_k(x_k,x_{k+1})}  P_k(x_k,x_{k+1}) dx_{k+1}\\
=& \int \left(\prod_{k=0}^{m-1}  \frac{f_k(x_k,y_k;x_{k+1}, y_{k+1})}{P_k(x_k,x_{k+1})}\right)   \prod_{k=0}^{m-1}  P_k(x_k,x_{k+1}) dx_{k+1}\\
=& \int \left(\prod_{k=0}^{m-1} g_k(x_k,y_k;x_{k+1}, y_{k+1})\right) \mathds P_{x_0,y_0}  (\bar X_{t_1} \in dx_1, \dots, \bar X_{t_m} \in dx_m) =\mathds E[L_y^m],
\end{align*}
where $g_k$ is defined by (\ref{EqDefFunctg_k}) and the next-to-last equality follows from the Markov property of the process $(\bar X_{t_k}, \; k=0,\ldots,m)$.
Now, looking at the numerator of (\ref{eqintphi}), similar computations lead to:
\begin{align*}
&\int \Phi(x^m) N_k(x^m; y^m)  dx_1\ldots dx_m\\
 =&   \int \bar{F}(t_m,t_n,x_m)\prod_{k=0}^{m-1}  G_{\Delta_k\sigma_k^2}^{x_{k},x_{k+1}}({\bf a})  f_k(x_k,y_k;x_{k+1},y_{k+1})  dx_{k+1},\\
 =&  \int \bar{F}(t_m,t_n,x_m) \left(\prod_{k=0}^{m-1}  G_{\Delta_k\sigma_k^2}^{x_{k},x_{k+1}}({\bf a})\right)\\
&\qquad \qquad \times  \left( \prod_{k=0}^{m-1}  g_k(x_k,y_k;x_{k+1}, y_{k+1})\right)  \mathds P_{x_0,y_0}  (\bar X_{t_1} \in dx_1, \dots, \bar X_{t_m} \in dx_m)\\
=& \mathds E \big[\bar{F}(t_m,t_n,\bar X_{t_m}) K^{m} L_y^m \big]. 
\end{align*}
Finally, going back to  (\ref{eqintphi}), we obtain
$$ \mathds P \left(\tau_{\bf a}^{\bar X} > t_n  \vert\, (Y_{t_0},\dots,Y_{t_m}) =  y^m\right) =  \frac{\mathds E \big[\bar{F}(t_m,t_n,\bar X_{t_m}) K^{m} L_y^m \big] }{ \mathds E[L_y^m]}$$
which is the announced result.
\end{proof}

%
%
%
\noindent
\begin{rem}
Remark that more generally we have for every real valued bounded function $f$,
   \begin{equation}  \label{EqStandardFiltering}
   \mathds E[f(\bar X_{t_m}) \vert Y_{t_0}=y_0,\dots,Y_{t_m}= y_m] =  \frac{\mathds E \big[f(\bar X_{t_m})  L_y^m \big] }{ \mathds E[L_y^m]}.
   \end{equation}
   In this case we can estimate the right hand side quantity of (\ref{EqStandardFiltering}) using  recursive algorithms. In our setting, we can adapt these algorithms by noting  that the expression (\ref{EqExplicitCompSurvivalProba})  may be read in the similar form of (\ref{EqStandardFiltering}) as: 
         $$  \mathds P \left(\tau_{\bf a}^{\bar X} > t_n  \vert\, (Y_{t_0},\dots,Y_{t_m}) =  y^m\right) =  \frac{\mathds E \big[\bar{F}(t_m,t_n,\bar X_{t_m})  L_y^{'m} \big] }{ \mathds E[L_y^m]} $$
  where  
  $$   L_y^{'m} =  \prod_{k=0}^{m-1}   g_k(\bar X_{t_k},y_k;\bar X_{t_{k+1}},y_{k+1}) G_{\Delta_k\sigma_k^2}^{\bar{X}_{t_k},\bar{X}_{t_{k+1}}}({\bf a}) $$
  and where the involved functions are defined in Theorem \ref{ThmMainResultPhiNLF}.
\end{rem}

Considering the model given in Example \ref{exa:BS}, one may apply  the result of Theorem \ref{ThmMainResultPhiNLF} using the continuous Euler process $\bar X$ of the signal. However since in this framework the solutions of the stochastic differential equations are explicit, we refrain from using the Euler scheme to avoid adding  additional error.  In the next result we deduce a similar representation of the conditional survival probability  using the explicit solutions of the stochastic differential equations of the model (\ref{DinBS}).

\begin{cor} \label{CorExampleDinBS} Consider that the signal process $X$ and the observation process $Y$ evolve following the stochastic differential equations given in  (\ref{DinBS}):
 \begin{equation}
\begin{cases}
dX_t  =  X_t ( \mu dt  +  \sigma  dW_t), &  X_0 = x_0, \\
d Y_t  =  Y_t (r dt + \nu d W_t + \delta d \widetilde  W_t), & Y_0=y_0.
\end{cases}
\end{equation}
   Then, 
 \begin{equation}   \label{EqForFixedObservationBS}
\Psi (y^m) = \frac{\mathds E \big[F(t_m,t_n, X_{t_m}) K^{m} L_y^m \big] }{ \mathds E[L_y^m]},
\end{equation}
with 
$$   K^{m} =  \prod_{k=0}^{m-1}  G_{\Delta_k\sigma_k^2}^{X_{t_k},X_{t_{k+1}}}({\bf a})  \quad \textrm{and} \quad L_y^m =  \prod_{k=0}^{m-1} g_k(X_{t_k},y_k;X_{t_{k+1}},y_{k+1}).  $$
The function  $g_k$  is defined by   
\setlength\arraycolsep{1pt}
\begin{equation}
g_k(x_k,y_k;x_{k+1},y_{k+1}) =   \frac{1}{(2\pi \Delta_k)^{3/2}  \pi_k }  \exp\Bigg(  -  \frac{\nu^2}{2 \delta^2 \Delta_k} \Big( \frac{ \log x_{k+1}  - m_k^1}{\sigma } -   \frac{\log y_{k+1} -   m_k^2}{\nu}  \Big)^2 \Bigg) 
\end{equation}
with 
$$\pi_k :=  \sigma^2 \delta x_{k+1}^2 y_{k+1},\quad  m_k^1 := \log  x_k + (\mu - \frac{1}{2} \sigma^2 ) \Delta_k\quad \text{and}\quad  m_k^2 := \log y_k + (r - \frac{1}{2} \nu^2 - \frac{1}{2} \delta^2 )  \Delta_k.$$
\end{cor}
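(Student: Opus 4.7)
My plan is to follow the proof of Theorem \ref{ThmMainResultPhiNLF} nearly verbatim, substituting the exact geometric Brownian motion solutions $X$ and $Y$ of (\ref{DinBS}) for their Euler approximations. Two features of the Black--Scholes model make this substitution clean: on each subinterval $[t_k,t_{k+1}]$ the log-price $\log X$ is a Brownian motion with constant drift $\mu-\sigma^2/2$ and diffusion coefficient $\sigma$, so that Lemma \ref{LemBrowBridgeMethod} applies on the log scale and produces for the minimum of $X$ between its endpoints a formula of the type (\ref{EqDefOfG}); moreover $(\log X_t,\log Y_t)$ is globally Gaussian with explicit parameters, so the transition densities of $(X,Y)$ are available in closed form without any discretization error.

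The structural part of the argument transfers without change. First I would decompose $\{\tau_{\bf a}^X>t_n\}=\{\tau_{\bf a}^X>t_m\}\cap\{\tau^{(t_m)}>t_n\}$, condition on $\mathcal G_{t_m}$, and combine the independence of $\widetilde W$ and $W$ with the Markov property of $X$ to obtain, as in Lemma \ref{LemPreliminaries} and Theorem \ref{ThmMainResultPhi}, the reduction
$$\mathds P(\tau_{\bf a}^X>t_n\,|\,Y^m)=\mathds E[\Phi(X^m)\,|\,Y^m]$$
with $\Phi(x^m)=F(t_m,t_n,x_m)\prod_{k=0}^{m-1} G_{\Delta_k\sigma_k^2}^{x_k,x_{k+1}}({\bf a})$. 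Then Bayes' rule applied to the joint transition density $f_k^{BS}$ of $(X_{t_{k+1}},Y_{t_{k+1}})|(X_{t_k},Y_{t_k})$, combined with the multiplicative trick of inserting and removing the marginal density $P_k^{BS}$ of $X_{t_{k+1}}|X_{t_k}$, yields the ratio $\mathds E[F\cdot K^m L_y^m]/\mathds E[L_y^m]$ exactly as in the proof of Theorem \ref{ThmMainResultPhiNLF}.

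The only genuinely new ingredient is therefore the explicit form of $g_k=f_k^{BS}/P_k^{BS}$. Using (\ref{EqReturnYPartiCase1}) and the analogous closed-form expression for $X$, the vector $(\log X_{t_{k+1}},\log Y_{t_{k+1}})$ given $(X_{t_k},Y_{t_k})=(x_k,y_k)$ is two-dimensional Gaussian with mean $(m_k^1,m_k^2)$ as defined in the statement and covariance of the form (\ref{DefmkSigk}) with $\sigma_k,\nu_k,\delta_k$ replaced by $\sigma,\nu,\delta$; transferring to $(X_{t_{k+1}},Y_{t_{k+1}})$ contributes a Jacobian factor $1/(x_{k+1}y_{k+1})$. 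Similarly $P_k^{BS}$ is a univariate log-normal density with Jacobian $1/x_{k+1}$ and log-variance $\sigma^2\Delta_k$. Taking the ratio, one recognizes the conditional Gaussian density of $\log Y_{t_{k+1}}$ given $\log X_{t_{k+1}}$, which has variance $\delta^2\Delta_k$, together with the residual Jacobian $1/y_{k+1}$ from the $Y$-side. The main care point---what I expect to be the bookkeeping obstacle---is to track these log-Jacobians cleanly, since they differ from the purely additive Euler-scheme calculation in Theorem \ref{ThmMainResultPhiNLF}, so as to produce the announced normalizing denominator $\pi_k=\sigma^2\delta x_{k+1}^2 y_{k+1}$ and prefactor $(2\pi\Delta_k)^{-3/2}$.
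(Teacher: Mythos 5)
Your proposal follows essentially the same route as the paper: write the exact lognormal transition densities for $(X_{t_{k+1}},Y_{t_{k+1}})$ given $(X_{t_k},Y_{t_k})$ and for $X_{t_{k+1}}$ given $X_{t_k}$, form the ratio $g_k=f_k^{BS}/P_k^{BS}$ (tracking the log--Jacobians), and conclude by repeating the Bayes-plus-telescoping argument of Theorem \ref{ThmMainResultPhiNLF}. The paper's proof does exactly this and stops after recording the two densities; your ``conditional Gaussian density of $\log Y_{t_{k+1}}$ given $\log X_{t_{k+1}}$'' reading of $g_k$ is a cleaner way to present the same computation.

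One point deserves scrutiny. You justify keeping the factor $K^m$ by noting that $\log X$ is Brownian motion with drift on each $[t_k,t_{k+1}]$ and invoking Lemma \ref{LemBrowBridgeMethod} ``on the log scale.'' That is the correct way to reason about the exact bridge of a geometric Brownian motion, but the formula it produces is
\[
1-\exp\!\Big(-\frac{2\,\log(x_k/\mathbf a)\,\log(x_{k+1}/\mathbf a)}{\Delta_k\,\sigma^2}\Big)\,{1\!\!1}_{\{x_k\ge\mathbf a,\,x_{k+1}\ge\mathbf a\}},
\]
which is not the $G_{\Delta_k\sigma_k^2}^{x_k,x_{k+1}}(\mathbf a)$ of \eqref{EqDefOfG} appearing in $K^m$ (the latter reads $1-\exp(-2(x_k-\mathbf a)(x_{k+1}-\mathbf a)/(\Delta_k\sigma^2 x_k^2))$ once $\sigma_k=\sigma x_k$). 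The two agree to leading order near the barrier but are not identical. The paper quietly retains the Euler--bridge $G$ as an approximation of the true crossing probability (only the marginals are exact, not the interpolation between them), whereas your log-scale justification would, if carried through consistently, replace $G$ by the genuine log-bridge factor above. Either fix the $K^m$ you write down to the log-scale version you actually derive, or state explicitly that the bridge factor remains the Euler approximation and your computation only removes the Euler error in the grid-point marginals -- but as written the justification and the formula do not match.
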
 


\begin{proof}[{\bf Proof.}]  It follows from  It\^o formula that  for every $s \leq t$,
 \begin{equation}
\begin{cases}
X_t  =  X_s \exp\left( ( \mu - \frac{1}{2} \sigma^2) (t-s)  +  \sigma  (W_t -W_s) \right)  \\
Y_t  =  Y_s \exp \left(   (r - \frac{1}{2} \nu^2 - \frac{1}{2} \delta^2) (t-s) + \nu (W_t -W_s) + \delta (\widetilde  W_t-\widetilde  W_s) \right).
\end{cases}
\end{equation}
Then, for every $k=0,\dots,m-1$ the random vector $(X_{t_{k+1}},Y_{t_{k+1}}) \vert (X_{t_k}, Y_{t_k}) = (x_k,y_k)$ has a bivariate lognormal distribution with mean $m_k$ and covariance matrix $\Sigma$ given by
\begin{equation*}  
m_k =  \begin{pmatrix}
  \log x_k +  (\mu -\frac{1}{2} \sigma^2) \Delta_k   \\
\log y_k + (r-\frac{1}{2}\nu^2 - \frac{1}{2} \delta^2)  \Delta_k\\ 
\end{pmatrix}, 
\qquad   \ 
\Sigma =  \Delta_k \begin{pmatrix}
\sigma^2  & \sigma \nu   \\ 
\sigma \nu & \quad \nu^2+ \delta^2
\end{pmatrix}.
\end{equation*}
Hence, its density reads (setting $\mu_k=\sigma  \delta  x_{k+1} y_{k+1} $)
\setlength\arraycolsep{1pt}
\begin{eqnarray*}
f_k(x_k,y_k;x_{k+1},y_{k+1})   =   \frac{1}{2\pi  \Delta_k \mu_k}  \exp\Bigg( & - & \frac{\nu^2 +\delta^2}{2 \delta^2 \Delta_k} \Big\{  \frac{ (\log x_{k+1}  - m_k^1)^2}{\sigma^2 }   \\
&  - &  \frac{2 \nu}{\sigma (\nu^2 + \delta^2) } (\log x_{k+1}  - m_k^1)(\log y_{k+1} - m_k^2) \\
& + &  \frac{(\log y_{k+1} -   m_k^2)^2}{\nu^2 + \delta^2}   \Big\} \Bigg) ,
\end{eqnarray*}
where $m_k^1=\log x_{k} +(\mu - \frac{1}{2} \sigma^2) \Delta_k$ and  $m_k^2 := \log y_k + (r - \frac{1}{2} \nu^2 - \frac{1}{2} \delta^2 )  \Delta_k$.   Furthermore, for every $k=0,\dots,m-1$, the random variable $X_{t_{k+1}} \vert X_{t_k} = x_k$ has a lognormal distribution with mean $m_k^1$ and variance $\sigma^2$ so that its density distribution  $P_k(x_k,x_{k+1})$ is given by
$$P_k(x_k,x_{k+1}) =  \frac{1}{\sqrt{2 \pi  \Delta_k}\, \sigma x_{k+1} } \exp \Big(-  \frac{(\log x_{k+1} - m_k^1)^2}{2 \sigma^2 \Delta_k } \Big).$$
We then conclude the proof by using the same arguments than those of the proof of Theorem \ref{ThmMainResultPhiNLF}.
\end{proof}

There exist several methods to estimate the above representation of the conditional survival probability. These methods  involve, amount others,  Monte Carlo simulations and  optimal quantization methods.  Owing  to the numerical performance of the optimal quantization method due for example to its fast performability as soon as the optimal grids are obtained,  we will use optimal quantization methods to estimate the conditional survival probability. 

The use of  optimal quantization methods to estimate the filter supposes to have numerical access  to optimal (or stationary) quantizers of the marginals of the signal process.  One may use  the optimal vector quantization method  (as done in the seminal work \cite{PagPha} and used in \cite{CalSag}) to estimate these marginals.     This method  requires  the use of some algorithms, like stochastic algorithms or Lloyd's algorithm,  to obtain numerically the optimal (or stationary) quantizers of the marginals of the signal process.     Given  these optimal  quantizers,   the filter estimation is obtained  quite instantaneously. However,  the step of search of stationary quantizers is very time consuming.

      We propose  here an alternative method,  the (quadratic) marginal functional quantization method (introduce in \cite{Sag} to price barrier options),  to  quantize the marginals of the signal process.        The marginal  functional quantization method  consists first in considering the ordinary differential equation (ODE)  resulting to  the substitution of  the Brownian motion appearing in the dynamics of the  signal  process  by a quadratic quantization  of  the  Brownian motion. Then, by constructing some ``good'' marginal  quantization of the signal process based on the solution of the previous ODE's, we will show how  to estimate the nonlinear filter.  Since this procedure is based on the quantization of the Brownian motion and skips the use of algorithms to perform the stationary quantizers, the computation of the marginal quantizers  is quite instantaneous.  This reduce drastically  the time computation of the procedure with respect to the  vector quantization method since we  skip  the step of the  use of algorithms  search  of marginal  quantizers by using instead,   the marginals of the functional  quantization of the signal process.

In the rest of the paper we deal with the estimations methods of the conditional survival probability.

\section{Numerical tools} 
Our aim in this section is to derive  a way to compute numerically the conditional survival probability using Equation  (\ref{EqExplicitCompSurvivalProba}).    To this end, we have to estimate three  quantities: the quantity $\bar{F}(t_m,t_n,\bar X_{t_m})$, the expectation  $\mathds E[\bar{F}(t_m,t_n,\bar X_{t_m}) K^m L_y^m]$ as soon as $\bar{F}(t_{m},t_n, \bar X_{t_m})$ is estimated, and finally, the expectation $\mathds E[L_y^m]$.  Both expectations will be estimated using marginal functional quantization method.  The probability $\bar{F}(t_m,t_n,\bar X_{t_m})$ will be estimated by Monte Carlo simulations. 

%
Before  dealing with  the estimation tools, we shall recall first some basic results about both optimal vector quantization and functional quantization. Then,  we will  show how to construct the marginal functional quantization process which will be used to estimate the quantities of interest. 


\subsection{Overview on optimal quantization methods} 

The optimal vector quantization  on a  grid $\Gamma =\{ x_1,\cdots,x_N \}$  (which will be called a quantizer)   of an $\mathbb R^d$-valued random vector $X$   defined  on a probability space $(\Omega,\mathcal{A},\mathbb{P})$ with finite $r$-th moment and probability distribution $\mathbb P_X$ consists in  finding  the best approximation of  $X$  by a  Borel function of $X$ taking  at most  $N$ values. This turns out to find the solution of the following minimization problem:
\begin{equation}   \label{EqDefErrorQuant}
 e_{N,r}(X)  =  \inf{\{ \Vert X - \widehat{X}^{\Gamma} \Vert_r, \Gamma \subset \mathbb{R}^d,   \vert \Gamma \vert \leq N \}}, 
 \end{equation}
where ${\Vert X \Vert}_r := {\left(\mathbb E[{|X|}^r] \right)}^{1/r}$ and  where $ \widehat{X}^{\Gamma} := \sum_{i=1}^N x_i {1 \! \! 1}_{\{X \in C_i(\Gamma)\}}$  is the quantization of $X$ (we will write $\widehat{X}^N$ instead of $\widehat{X}^{\Gamma}$)  on the grid $\Gamma$ and $(C_i(\Gamma))_{i=1,\dots,N}$  corresponds to a Voronoi tessellation  of  $\mathbb{R}^d$ (with respect to a norm $\vert \cdot \vert$ on $\mathbb{R}^d$), that is, a Borel partition of $\mathbb{R}^d$ satisfying for every $i$, $$ C_i(\Gamma) \subset \{ x \in \mathbb{R}^d : \vert x-x_i \vert = \min_{j=1,\dots,N} \vert x-x_j \vert \}.$$
%
%
%
We know that for every $N \geq 1$, the infimum in $(\ref{EqDefErrorQuant})$ is reached at  one  grid $\Gamma^{\star}$  at least,  called a  $L^r$-optimal $N$-quantizer.  It is also known that if  $\textrm{ card(supp}(\mathbb P_X)) \geq N $ then   $\vert  \Gamma \vert = N$ (see e.g. $\cite{GraLus}$ or $\cite{Pag`es1998}$).  Moreover, the  $L^r$-mean quantization error  $e_{N,r}(X)$ decreases to zero  at  an  $N^{-1/d}$-rate as the size $N$ of  the grid  $\Gamma$ goes to infinity. This convergence rate has been investigated in \cite{BucWis} and  \cite{Zad}  for absolutely continuous probability measures under the  quadratic norm on $\mathbb{R}^d$, and studied in great details  in \cite{GraLus} under an arbitrary norm on  $\mathbb{R}^d$  for  absolutely continuous measures  and some singular measures.

From the numerical integration viewpoint, finding an optimal quantization grid $\Gamma^{\star}$ may be a challenging task. In practice (we will only consider the quadratic case, i.e. when $r=2$)  we are sometimes led to find some ``good'' quantizations ${\rm Proj}_{\Gamma}(X)$ (with $\Gamma = \{x_1,\dots,x_N\}$)  which are close to $X$ in distribution, so that  for every Borel function $F: \mathbb R^d \mapsto \mathbb R$, we can approximate $\mathbb E[ F(X)]$ by  
\begin{equation} \label{QuantProcedureEstim}
\mathbb{E}\left[ F \big(\widehat{X}^N \big)\right] = \sum_{i=1}^N F(x_i)  \ p_i,
\end{equation}
where $p_i =\mathbb{P}\big( {\rm Proj}_{\Gamma}(X) = x_i \big).$  Amount  ``good'' quantizations of $X$  we have stationary quantizers.  A grid $\Gamma$ inducing the quantization $\widehat{X}^N$  of $X$  is said stationary  if  
\begin{equation}
\forall \ i \neq j, \quad \ \ x_{i}\neq x_{j} \ \ \mbox{and} \ \ \mathbb{P}\left(X \in \cup_{i} \partial  C_i(\Gamma) \right)=0
\label{eq11}
\end{equation}
 and
$$
\mathbb{E}\left[X  \vert  \widehat{X}^N \right] =\widehat{X}^N.
$$
The stationary quantizers search is based on zero search recursive procedures like Newton algorithm in the one dimensional framework and  some algorithms as  Lloyd's I algorithms (see e.g. \cite{GerGra}),   the Competitive Learning Vector Quantization (CLVQ) algorithm (see \cite{GerGra}) or stochastic algorithms (see \cite{PagPri03}) in the multidimensional framework.  Note that optimal  quantizers estimates   of the multivariate Gaussian random vector are available in the  website {\tt www.quantize.math-fi.com}.

We next  recall some error bounds induced from approximating $\mathbb E[F(X)]$ by  (\ref{QuantProcedureEstim}). Let $\Gamma$ be a stationary quantizer    and $F$ be a Borel function on $\mathbb R^d$.  
\begin{itemize}
\item[(i)]  If $F$ is convex then
$$
\mathbb{E}\big[F(  \widehat{X}^N)\big]\leq \mathbb{E}[F(X)].
$$

\item[(ii)]Lipschitz functions: 
\begin{itemize}

\item  If $F$ is Lipschitz continuous then (this error bound doesn't require the quantizer $\Gamma$ to be stationary)
$$
\big \vert \mathbb{E}[ F(X) ]- \mathbb{E} [F( \widehat{X}^N) ]  \big \vert \leq [F]_{Lip} \Vert X-   \widehat{X}^N \Vert_{2},
$$
where $$[F]_{{\rm Lip}}:= \sup_{x \not= y} \frac{\vert F(x) - F(y) \vert}{\vert x-y \vert}.$$
\item Let $\theta: \mathbb R^d\rightarrow \mathbb{R}_{+}$ be a nonnegative convex function such that $\theta(X) \in L^{2}(\mathbb{P})$. If $F$ is locally Lipschitz with at most $\theta$-growth, i.e. $\left|F(x)-F(y)\right| \leq [F]_{Lip} |x-y| \left(\theta(x)+\theta(y)\right)$ then $F(X)\in L^{1}(\mathbb{P})$ and
$$
\big \vert \mathbb{E}  [F(X)] - \mathbb{E} [F( \widehat{X}^N)]  \big \vert \leq 2 [F]_{Lip} \Vert X- \widehat{X}^N\Vert_{2} \Vert \theta(X) \Vert_{2}.
$$
\end{itemize}

\item[(iii)]Differentiable functionals: \\
If $F$ is differentiable on $\mathbb R^d$ with an $\alpha$-H\"older differential D$F$ ($\alpha \in [0,1]$), then
$$
\big \vert \mathbb{E}  [F(X)]  - \mathbb{E} [F(\widehat{X}^N )] \big \vert \leq [DF]_{\alpha} \Vert X-  \widehat{X}^N \Vert_{2}^{1+\alpha}.
$$
\end{itemize}
Other error bounds related to the regularity of $F$ may be found in  \cite{PagPri}).

The optimal vector quantization may be extended to random vectors with values in a set of infinite dimension, in particular to stochastic processes viewed as random variables with values in $L^2([0,T],dt)$ endowed with the  norm
$$ \mathbb E \left[\vert X \vert_{L^{2}_{T}}^2\right]  =  \int_{0}^{T} \mathbb E [X^{2}_{s}] ds <+\infty. $$
The functional quantization of the stochastic process $(X_t)_{t \in [0,T]}$ with dynamics 
$$ dX_t = b(X_t,t) dt + \sigma(X_t,t) dW_t , \quad X_0 = x \in \mathbb R, $$
is based on the functional  quantization of the Brownian motion $W$. One way to quantize the Brownian motion is to use the optimal product quantization using its Karhunen-Lo\`eve expansion which reads :
$$
W\; \stackrel{L^{2}_{T}}{=} \;\sum_{n \geq 1} \sqrt{\lambda_{n}} \xi_{n} e_{n},
$$
 where $\xi_{n}:=\left\langle W,e_{n}\right\rangle /\sqrt{\lambda_{n}}$, $n\geq1$, is a sequence of i.i.d. random variables with standard normal distribution and 
$$
e_{n}(t)=\sqrt{\frac{2}{T}} \sin\left(\pi \left(n-\frac{1}{2}\right)\frac{t}{T}\right), \ \ \lambda_{n}=\left(\frac{T}{\pi\left(n-\frac{1}{2}\right)}\right)^{2}, \ \ n \geq 1.
$$
In fact, from the previous expansion, a functional quantization $\widehat W$ of the process $W$ of size at most $N$ is defined by  
\begin{equation}  \label{EqDefQuantW}
 \widehat{W}_{t}^N = \sum_{n\geq1} \sqrt{\lambda_{n}} \hat{\xi}^{x^{N_n}}_{n} e_{n}(t) 
 \end{equation}
where $ \hat{\xi}^{x^{N_{n}}}_{n}$ (with $x^{N_n} = \{x^{N_n}_1,\dots,x^{N_n}_{N_n}  \}$)  is the optimal $N_{n}$-quantization of $\xi_{n}$ and $ N_{1}\times \dots \times N_{n} \leq N$, with $N_i \geq 2$ for $i\leq n$,  and $N_k=1$ for $k \geq n+1$, so that the expansion defined in (\ref{EqDefQuantW}) is a finite sum.    The product quantizer $\chi$ that produces the above Voronoi quantization $\widehat{W}$ is defined by
$$
\chi_{\underline{i}}(t) = \sum_{n\geq1} \sqrt{\lambda_{n}} x_{i_{n}}^{N_n} e_{n}(t), \ \ \ \underline{i}=(i_{1},\cdots,i_{n}, \cdots)\in  \prod_{n\geq1} \left\{1,\cdots,N_{n}\right\}.
$$

\noindent and for every multi-index $\underline{i} \in \prod_{n\geq1} \left\{1,\cdots,N_{n}\right\}$, the associated Voronoi cell of $\chi$ is
$$
C_{\underline{i}}(\chi) = \prod_{n\geq1} \sqrt{\lambda_{n}} C_{i_{n}}(x^{N_n}).
$$
The optimal product  quantizer of size at most $N$, denoted $\widehat{ W}^N$, of the Brownian motion  is defined as the solution  of  the  following optimization problem:
\begin{equation} \label{EquatProbBit}
 \min \big \{ \Vert W- \widehat{W}^N \Vert_{2}, \ N_{1},\cdots,N_{n} \geq 1, \;N_{1}\times \cdots \times N_{n} \leq N,\; N \geq 1  \big\}.
 \end{equation}
 Moreover  this optimal product quantizer induces a rate-optimal sequence of quantizers (see e.g. \cite{LusPag1} for more details), i.e. 
$$
\Vert W-\widehat{W}^N \Vert_{2} \leq K_{W} \frac{T}{(\log N)^{\frac{1}{2}}}
$$
for some real constant $K_{W}>0$. To define a functional quantization of the stochastic process $(X_t)_{t \geq 0}$ consider $(\chi^{N})_{N\geq1}$ a sequence of rate-optimal product quantizers of the Brownian motion and, for every multi-index $\underline{i} \in \prod_{n\geq1} \left\{1,\cdots,N_{n}\right\}$, with $N_{1}\times \cdots \times N_{n} \leq N$, consider $x_{\underline{i}}$ the solution of the following integral equation
\begin{equation} \label{EqQuantX}
dx_{\underline{i}}(t) = \Big(b(x_{\underline{i}}(t),t)-\frac{1}{2} \sigma \sigma'(x_{\underline{i}}(t),t)\Big) dt + \sigma(x_{\underline{i}}(t),t) d\chi_{\underline{i}}(t),
\end{equation}
where $\sigma'$ is the derivative of $\sigma$. We define the functional (non-Voronoi) quantization $\widetilde{X}$ process of the stochastic process $X$ of size at most $N$ by
 \begin{equation*}
\widetilde{X}^N = \sum_{i=1}^{N} x_{\underline{i}} \mbox{\textbf{1}}_{C_{\underline{i}}(\chi)}(W), \quad  N \geq 1.
\end{equation*}
We have the following result. 

\begin{prop}[See \cite{LusPag1}]
Under some suitable conditions on the coefficients of the diffusion, which in the homogeneous case are equivalent to:   $b$ is differentiable, $\sigma$ is positive twice differentiable and  $b'-b\frac{\sigma'}{\sigma}-\frac{1}{2}\sigma\sigma''$ is bounded, we have  
$$
\Vert X-\widetilde{X}^N \Vert_{2} =  \mathcal O\big((\log N)^{-\frac{1}{2}}\big).
$$
\end{prop}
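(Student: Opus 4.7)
The plan is to reduce, via the Lamperti transform, the multiplicative-noise SDE to one with additive (unit) noise, transfer the known functional quantization rate for the Brownian motion to the transformed process, and push the estimate back through the inverse transform. The technical content of the hypothesis is precisely what is needed for the drift in the Lamperti coordinates to be Lipschitz.

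First, in the homogeneous case, I would introduce the Lamperti map $\phi(x)=\int_{x_{0}}^{x} du/\sigma(u)$ and set $Y_{t}=\phi(X_{t})$. Since $\sigma>0$, $\phi$ is a $C^{2}$-diffeomorphism on its range with $(\phi^{-1})'=\sigma\circ\phi^{-1}$, and Itô's formula yields
\begin{equation*}
dY_{t}=c(Y_{t})\,dt+dW_{t},\qquad c:=\bigl(b/\sigma-\sigma'/2\bigr)\circ\phi^{-1}.
\end{equation*}
A direct chain-rule computation gives $c'(y)=\bigl(b'-b\,\sigma'/\sigma-\sigma\sigma''/2\bigr)(\phi^{-1}(y))$, so the hypothesis exactly states that $c$ is Lipschitz continuous on $\phi(\mathbb{R})$.

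Second, I would show that the process $\widetilde{X}^{N}$ defined by the integral equation (\ref{EqQuantX}) coincides with $\phi^{-1}(\widetilde{Y}^{N})$, where $\widetilde{Y}^{N}$ solves the driven ODE $d\widetilde{Y}^{N}_{t}=c(\widetilde{Y}^{N}_{t})\,dt+d\chi(t)$ with $\chi=\widehat{W}^{N}$ (a path of bounded variation). This is where the Stratonovich correction $-\tfrac{1}{2}\sigma\sigma'$ appearing in (\ref{EqQuantX}) is essential: it is precisely the correction that makes the classical chain rule compatible, pathwise, with the Itô chain rule used on $X$. Once this identification is made, both $Y$ and $\widetilde{Y}^{N}$ satisfy the same ODE with different (sufficiently regular) drivers $W$ and $\widehat{W}^{N}$. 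A standard Grönwall argument using the Lipschitz property of $c$ then gives
\begin{equation*}
\bigl\|Y-\widetilde{Y}^{N}\bigr\|_{L^{2}_{T}}\leq K_{c,T}\,\bigl\|W-\widehat{W}^{N}\bigr\|_{L^{2}_{T}},
\end{equation*}
with $K_{c,T}$ depending only on $[c]_{\mathrm{Lip}}$ and $T$.

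Third, I would transfer the estimate back through $\phi^{-1}$. Since $(\phi^{-1})'=\sigma\circ\phi^{-1}$ and the coefficients $b,\sigma$ are assumed Lipschitz, classical $L^{p}$-moment bounds for both $X$ and $\widetilde{X}^{N}$ (the latter following from the ODE representation together with the uniform $L^{p}$-control on $\widehat{W}^{N}$ inherited from the rate-optimal product construction) give a locally-Lipschitz-with-polynomial-growth estimate of the form $\|X-\widetilde{X}^{N}\|_{L^{2}_{T}}\leq C\,\|Y-\widetilde{Y}^{N}\|_{L^{2}_{T}}$. Combining this with the known rate $\|W-\widehat{W}^{N}\|_{L^{2}_{T}}\leq K_{W}T(\log N)^{-1/2}$ recalled just before the statement yields the announced bound.

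The main obstacle I would expect is the final transfer step: in general $\sigma\circ\phi^{-1}$ is not bounded, so bounding the $L^{2}$-norm of $X-\widetilde{X}^{N}$ by that of $Y-\widetilde{Y}^{N}$ requires controlling $\sigma$ along the trajectories of both processes via uniform $L^{p}$-moments, which is where the regularity assumptions on $b,\sigma$ (beyond mere Lipschitz continuity) enter; extending the argument to the non-homogeneous case requires a time-dependent Lamperti map $\phi(x,t)$ and tracking the extra drift term coming from $\partial_{t}\phi$, but the scheme is identical.
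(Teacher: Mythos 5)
The paper does not prove this proposition; it is stated as a direct citation of \cite{LusPag1}, so there is no in-text proof to compare against. That said, your sketch reproduces, in outline, the actual strategy used in the cited reference, and the first two steps are right on target. The computation $c'(y)=\bigl(b'-b\sigma'/\sigma-\tfrac{1}{2}\sigma\sigma''\bigr)(\phi^{-1}(y))$ correctly identifies the stated hypothesis as precisely the Lipschitz continuity of the Lamperti drift, and the observation that the Stratonovich correction $-\tfrac{1}{2}\sigma\sigma'$ in (\ref{EqQuantX}) is exactly what makes $\widetilde{X}^{N}=\phi^{-1}(\widetilde{Y}^{N})$ pathwise (via the classical chain rule along the smooth driver $\chi$) is the crux of the Luschgy--Pag\`es construction. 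The Gr\"onwall estimate in Lamperti coordinates is also correct: from $|Y_{t}-\widetilde{Y}^{N}_{t}|\le [c]_{\mathrm{Lip}}\int_{0}^{t}|Y_{s}-\widetilde{Y}^{N}_{s}|\,ds+|W_{t}-\chi_{t}|$ one gets a pathwise $L^{2}_{T}$ comparison with constant depending only on $[c]_{\mathrm{Lip}}$ and $T$.

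Where the argument genuinely breaks down is the final transfer through $\phi^{-1}$, and the difficulty is worse than you indicate. You write that $\phi^{-1}$ is ``locally Lipschitz with at most polynomial growth'', but this is false under the paper's standing hypotheses, which allow $\sigma$ of linear growth. Since $(\phi^{-1})'=\sigma\circ\phi^{-1}$, linear growth of $\sigma$ forces $\phi^{-1}$ (and its derivative) to grow \emph{exponentially}: for $\sigma(x)=\sigma_{0}x$ one has $\phi^{-1}(y)=x_{0}e^{\sigma_{0}y}$. So to bound $\|X-\widetilde{X}^{N}\|_{2}$ from $\|Y-\widetilde{Y}^{N}\|_{2}$ via the mean-value theorem one must control $\mathbb{E}\bigl[\sup_{t\le T}|(\phi^{-1})'(\xi_{t})|^{p}\bigr]$ with $\xi_{t}$ between $Y_{t}$ and $\widetilde{Y}^{N}_{t}$, which requires exponential moments of $\sup_{t}|Y_{t}|$ and, more delicately, of $\sup_{t}|\widetilde{Y}^{N}_{t}|$ \emph{uniformly in $N$}. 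For $Y$ this follows from $c$ being Lipschitz and Fernique's theorem, but for $\widetilde{Y}^{N}$ it hinges on uniform-in-$N$ control of the quantizer paths $\chi_{\underline{i}}$, which is not automatic from the recalled $L^{2}$-rate and needs its own argument. This uniform exponential-moment estimate, combined with a H\"older splitting (so that one also needs the rate $\|W-\widehat{W}^{N}\|_{p}=\mathcal{O}((\log N)^{-1/2})$ for $p>2$, not just $p=2$), is really the technical heart of the result and cannot be dispatched as a routine moment bound; your sketch flags the issue but the quantitative claim you attach to it is wrong.
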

We observe that  for any initial value $x$ of the quantized process $\widehat{X}$, the marginals $\widehat{X}_t$ are of size $d_N =N_1\times \dots \times N_n$ where $(N_1,\dots,N_n)$ is the solution of (\ref{EquatProbBit}).  To define the marginal functional quantization process (still be denoted by $(\widehat X_t)$) of the process  $(X_t)$ we order the values of the marginals and define the marginal quantizer $x^{d_N}(t)$ for every $t>0$ by  $x^{d_N}(t) =\{x_1^{d_N}(t),\dots,x_{d_N}^{d_N}(t) \}$ and $x^{d_N}(0) = x$ and define likewise the marginal quantizers of $W_t$ by   $\chi^{d_N}(t) =\{\chi_1^{d_N}(t),\dots,\chi_{d_N}^{d_N}(t) \}$.    The marginal functional quantization process of size at most $N$ is then defined by 
$$ \widehat X^N_t = \sum_{i=1}^{d_N} x^{d_N}_i (t) \mbox{\textbf{1}}_{\{W_t \in C_i(\chi^{d_N}(t)) \}}. $$
Remark  that the use of the marginal functional quantization method can not be justified from the theoretical point of view since we do not know yet the rate of convergence of the marginals of the quantized process to the marginals of the initial process. However, this method has proved its efficiency from the numerical viewpoint  when used to estimate barrier option by optimal quantization, see \cite{Sag} (not that the considered  marginal quantization in \cite{Sag} is a little bit different from the one considered in this paper, nevertheless the numerical results are the same, up to at least a  $10^{-3}$ absolute error order).   

Let us come back to the problem of interest and let  the functionals $\pi_{y,m}$ and $\varpi_{y,m}$ be defined for every bounded measurable function $f$ by 

$$  \pi_{y,m} f = \mathds E \big[f( \bar X_{t_m}) L_{y}^m K^m \big]  \quad \textrm{and } \  \varpi_{y,m} f =  \mathds E\big[f(\bar X_{t_{m}}) L_{y}^m \big]. $$
Then,  
$$ \Pi_{y,m} \bar F(t_m,t_n,\cdot) := \mathds P\big(  \tau_{\bf a}^{\bar X} > t_n \vert  \,  Y_{t_0}=y_0,\dots, Y_{t_m} =y_m\big) = \frac{\pi_{y,m} \bar F(t_{m},t_n,\cdot)}{\varpi_{y,m} \mbox{\bf 1}},   $$
where $\mbox{\bf 1}(x) =1$, for every real $x$.  Then, it will be enough to show how to estimate $\pi_{y,m} \bar F(t_m,t_n,\cdot)$ since $\varpi_{y,m}  \mbox{\bf 1}$ is estimated similarly.  We discuss in the section below the estimation of  $\pi_{y,m}$.


\subsection{Estimation of $\pi_{y,m}$ by marginal functional quantization}
Our aim is to estimate 
\begin{equation*}
 \pi_{y,m} \bar F(t_{m},t_n,\cdot)  =  \mathds E \Big[ \bar F(t_{m},t_n,\bar X_{t_m}) \prod_{k=0}^{m-1} g_k^{{\bf a}}( \bar X_{t_k},y_k; \bar X_{t_{k+1}}, y_{k+1}) \Big]
 \end{equation*}
by marginal functional quantization  (with $\bar X \equiv X$ in the model  (\ref{DinBS})), where 
 $$ g_k^{{\bf a}}( \bar X_{t_k},y_k; \bar X_{t_{k+1}}, y_{k+1})  =  g_k( \bar X_{t_k},y_k; \bar X_{t_{k+1}}, y_{k+1}) \times G_{\Delta_k \sigma_k^2}^{\bar X_{t_{k}},\bar X_{t_{k+1}}} ({\bf a}) .  $$
  We deal with a general setting: the estimation of   $\pi_{y,m} f$  for any bounded and measurable function $f$.  Our  main reference is  \cite{PagPha} where  $\pi_{y,m} f$ has been estimated  using marginal vector quantization methods.
 \noindent
 Let  us define  for every $k=1,\dots,m$ the transition kernel $H_{y,k}$ by
 \begin{eqnarray*} \label{def_kernel_H}
 H_{y,k} f(x) &  = &  \mathds{E} \left[ f(\bar{X}_{t_k})  g_{k-1}^{{\bf a}}(x,y_{k-1};  \bar X_{t_{k}}, y_{k})  \vert  \bar{X}_{t_{k-1}} =x \right]  \\
 &  = & \int f(z) g_{k-1}^{{\bf a}}(x,y_{k-1};  z, y_{k})     P_k(x,dz) 
 \end{eqnarray*}
where $P_k(x,dz)$ is the density of the random variable $\bar {X}_{t_{k} } \vert \bar X_{t_{k-1}} = x$.  We set 
\begin{equation} \label{def_kernel_H0}
 H_{y,0} f= \mathbb{E}[f(\bar{X}_0)]  = \int f(z) \mu(dz) .
 \end{equation}
 Then, we have   (setting for every $k$,  $\mathcal F_{t_k}^{\bar X} = \sigma (\{ \bar X_{t_i}, i=0,\dots,k \})$)
 \begin{eqnarray*}
\pi_{y,k} f & =  &  \mathds{E} \Big[  \mathbb{E} \Big[ f(\bar{X}_{t_k}) \prod_{i=0}^{k-1}  g_i^{{\bf a}}(\bar X_{t_i},y_i;  \bar X_{t_{i+1}}, y_{i+1})     \big  \vert  \mathcal F_{t_{k-1}}^{\bar X} \Big] \Big]  \\
&  = &  \mathds{E} \Big[  \mathds{E} \Big[ f(\bar{X}_{t_k})  g_{k-1}^{{\bf a}}(\bar X_{t_{k-1}},y_{k-1};  \bar X_{t_{k}}, y_{k})       \vert \mathcal F_{t_{k-1}}^{\bar X} \Big]  \prod_{i=0}^{k-2}   g_i^{{\bf a}}(\bar X_{t_i},y_i;  \bar X_{t_{i+1}}, y_{i+1})  \Big]  \\
&  = &  \mathds{E} \Big[  \mathbb{E} \Big[ f(\bar{X}_{t_k})  g_{k-1}^{{\bf a}}(\bar X_{t_{k-1}},y_{k-1};  \bar X_{t_{k}}, y_{k})     \vert \bar{X}_{t_{k-1}} \Big]  \prod_{i=0}^{k-2} g_i^{{\bf a}}(\bar X_{t_i},y_i;  \bar X_{t_{i+1}}, y_{i+1})    \Big],  
\end{eqnarray*}
where the last equality is a consequence of the Markov property of the process $(\bar{X}_{t_k})$. Thus, we deduce that  for every $k=1,\dots, m$,
\begin{eqnarray*}
\pi_{y,k} f &  = &  \mathbb{E} \Big[  H_{y,k} f(\bar{X}_{t_{k-1}})  \prod_{i=0}^{k-2}   g_i^{{\bf a}}(\bar X_{t_i},y_i;  \bar X_{t_{i+1}}, y_{i+1})    \Big]\\
& = & \pi_{y,k-1} H_{y,k} f.
\end{eqnarray*}
It follows  that  $\pi_{y,m} f $ can be computed by the following recursive formula:
\begin{equation}  \label{EqEstimPiNL}
 \pi_{y,m} f = (H_{y,0}  \circ H_{y,1} \circ \dots \circ H_{y,m}) f.
\end{equation}
Therefore, to achieve the  estimation of  $\pi_{y,m}$, it remains to estimate the kernels $H_{y,k}$.  This will be done by marginal functional quantization.

Consider time discretization  steps $t_{k}$, $k=0,\ldots,m$  and let $\chi^{N_k}:= \{ \chi_k^1,\ldots,\chi_k^{N_k} \}$  be a $N_k$-quantizer of $W_{t_k}$ (we will consider the marginal functional quantization of Brownian motion so that $N_k = d_N$ for every $k$).  Suppose that   we have also  access  to the marginal functional quantization process $(\widehat{X}_{t_k})_{k}$  of the process $(\bar X_t)_{t \geq 0}$  over the time steps $t_k, k=m,\dots,n$:   $\{ x_k^1,\dots,x_k^{N_k} \}$, of sizes $N_k$ (keep in mind that $N_k =d_N$ for every $k$).

 It follows that the transition kernels $H_{y,k}$ may be estimated  for every $k=1,\dots,m$  by 
$$ \widehat{H}_{y,k}  = \sum_{j=1}^{N_k} \widehat{H}_{y,k}^{ij} \delta_{x_{k-1}^i}, $$
where  
\begin{equation}  \label{EqDefHIJNL}
 \widehat{H}_{y,k}^{ij} = g_{k-1}^{{\bf a}}(x^i_{k-1},y_{k-1};  x^j_k, y_{k})  \,  \hat{p}_k^{ij}, \quad i=1,\dots,N_{k-1}; j=1,\dots,N_k
 \end{equation}
and where the $\hat{p}_{k}^{ij}$'s correspond to the estimation of the transition probabilities from   $\widehat{X}_{t_{k-1}} = x_{k-1}^i$ to $\widehat{X}_{t_{k}} = x_k^j$:
\begin{equation} \label{transition_weightsNL} 
\hat{p}_k^{ij} = \mathbb{P} (\widehat{X}_{t_k} = x_k^j  \vert \widehat{X}_{t_{k-1}} = x_{k-1}^i),  \quad  i=1,\dots,N_{k-1}; j=1,\dots,N_{k}.
\end{equation}
 
 Finally, one will  perform the estimation  $\widehat{\pi}_{y,m}= \widehat H_{y,0}  \circ  \widehat H_{y,1} \circ \dots \circ \widehat H_{y,m} $ of $\pi_{y,m}$ as soon as we will be able to compute $\hat{p}_k^{ij}$. In the  proposition below we show how to compute these probabilities from the cumulative distribution function of  the random variable $ W_{t_k} \vert  W_{t_{k-1}} = x$, which has a Gaussian distribution with mean $ x $ and variance $\Delta_k$ (we refer to \cite{Sag} for a similar result).   
%
 \begin{prop}
 The transition probabilities can be estimated  for every $k=1,\dots,m$ by
\begin{equation} \label{EqApproxProbaBrow} 
 \hat{p}_{k}^{ij} \approx  \mathcal N\big(\chi^{j+}_k;\chi^{i}_{k-1}  \big) - \mathcal N\big(\chi^{j-}_{k};\chi^{i}_{k-1} \big),
 \end{equation}
where  $\mathcal N(\cdot\,; \chi^{i}_{k-1} )$ is the cumulative distribution function  of the normal distribution with mean $ \chi^{i}_{k-1}$ and variance $\Delta_k $  and  where  for every $k=1,\dots,m$,
\[
    \left \{ \begin{array}{ll}
 \chi^{j+}_k  = \frac{ \chi^{j+1}_k + \chi^{j}_k }{2}; \;  \chi^{j-}_k  = \frac{ \chi^{j-1}_k + \chi^{j}_k }{2}  \qquad j=1,\dots, N_k \\
 \\
 \chi^{1-}_k  =-\infty; \ \chi^{N_k^{+}}_k =+\infty.
 \end{array}  
 \right.
 \]
 \end{prop}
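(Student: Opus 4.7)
The plan is to work directly from the definition of the marginal functional quantization $\widehat{X}_{t_k}$ and translate the event $\{\widehat{X}_{t_k}=x_k^j\}$ into an event about the Brownian motion $W_{t_k}$, thereby reducing the computation of $\hat p_k^{ij}$ to a Gaussian probability.

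First, I would recall that by the definition of the marginal functional quantization given just before the proposition,
\[
\widehat{X}_{t_k}^N = \sum_{j=1}^{d_N} x_k^{j}\,\mathbf{1}_{\{W_{t_k}\in C_j(\chi^{d_N}(t_k))\}},
\]
so that the event $\{\widehat{X}_{t_k}=x_k^j\}$ coincides with $\{W_{t_k}\in C_j(\chi^{d_N}(t_k))\}$. In dimension one, the Voronoi cell $C_j(\chi^{d_N}(t_k))$ is simply the interval $[\chi_k^{j-},\chi_k^{j+}]$ whose endpoints are the midpoints between $\chi_k^{j}$ and its neighbours, with the conventions $\chi_k^{1-}=-\infty$ and $\chi_k^{N_k+}=+\infty$. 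Substituting this identification in the definition \eqref{transition_weightsNL} of $\hat p_k^{ij}$ yields
\[
\hat p_k^{ij}=\mathbb{P}\bigl(W_{t_k}\in[\chi_k^{j-},\chi_k^{j+}]\ \big|\ W_{t_{k-1}}\in[\chi_{k-1}^{i-},\chi_{k-1}^{i+}]\bigr).
\]

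The next step is the approximation: I would replace the conditioning event $\{W_{t_{k-1}}\in[\chi_{k-1}^{i-},\chi_{k-1}^{i+}]\}$ by the point $\{W_{t_{k-1}}=\chi_{k-1}^i\}$. This is the single non-exact step and is the heart of the approximation sign $\approx$ in the statement; it is justified when the cells of the marginal quantization of $W_{t_{k-1}}$ are small, which is the regime of interest. Under this substitution, the Markov property of the Brownian motion gives that conditionally on $W_{t_{k-1}}=\chi_{k-1}^i$, the law of $W_{t_k}$ is Gaussian with mean $\chi_{k-1}^i$ and variance $\Delta_k=t_k-t_{k-1}$.

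It then remains to express the resulting probability in closed form. Writing $\mathcal N(\cdot\,;\chi_{k-1}^i)$ for the cumulative distribution function of the $\mathcal N(\chi_{k-1}^i,\Delta_k)$ law, I would obtain
\[
\hat p_k^{ij}\approx \mathbb P\bigl(\chi_k^{j-}\le W_{t_k}\le\chi_k^{j+}\ \big|\ W_{t_{k-1}}=\chi_{k-1}^i\bigr)=\mathcal N(\chi_k^{j+};\chi_{k-1}^i)-\mathcal N(\chi_k^{j-};\chi_{k-1}^i),
\]
which is exactly \eqref{EqApproxProbaBrow}. The conventions $\chi_k^{1-}=-\infty$ and $\chi_k^{N_k+}=+\infty$ naturally correspond to the extremal Voronoi cells being unbounded half-lines, so the boundary cases are automatically handled. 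The main conceptual obstacle is to justify replacing the conditioning event by a point mass; everything else reduces to a direct reading of the definition of a Voronoi quantizer on $\mathbb R$ together with the Markov and Gaussian structure of $W$.
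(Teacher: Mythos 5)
Your proposal is correct and follows essentially the same route as the paper: both identify $\hat p_k^{ij}$ with $\mathbb{P}\bigl(W_{t_k}\in C_j \,\big|\, W_{t_{k-1}}\in C_i\bigr)$ via the definition of the marginal Voronoi cells, and both make the single approximation of collapsing the conditioning cell $C_i$ onto the quantizer point $\chi_{k-1}^i$ (which the paper phrases as a one-point quantization of the integral $\int_{C_i}\mathcal{N}(x;y)\,\mathbb{P}(W_{t_{k-1}}\in dy)$, but this is the same step dressed differently). Nothing is missing.
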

 
 \begin{proof}[$\textbf{Proof}$]
 One has  for every $k=1,\dots,m$,
\begin{eqnarray*}
 \hat{p}_{k}^{ij}  &  = & \mathds P\big(W_{t_k} \in C_j(\chi^{N_k})  \big\vert  W_{t_{k-1}} \in C_i(\chi^{N_{k-1}} )\big)  \\   
 &  = & \mathds P\big( W_{t_k} \leq \chi^{j+}_k  \big\vert  W_{t_{k-1}} \in  C_i(\chi^{N_{k-1}} ) \big)  - \mathds P\big(W_{t_k} \leq \chi^{j-}_k  \big\vert  W_{t_{k-1}} \in C_i(\chi^{N_{k-1}} )  \big).
\end{eqnarray*}
On the other hand, we have for every $x \in \mathbb R$,
\begin{eqnarray*}
\mathds P\big(W_{t_k} \leq x  \big\vert  W_{t_{k-1}} \in  C_i(\chi^{N_{k-1}} )  \big)  & = &  \frac{\mathds P\big( W_{t_k} \leq x ; W_{t_{k-1}} \in  C_i(\chi^{N_{k-1}} )  \big) }{ \mathds P\big(W_{t_{k-1}} \in C_i(\chi^{N_{k-1}} )  \big)}. 
\end{eqnarray*}  
Set $C_i = C_i(\chi^{N_{k-1}} )$. Then the numerator  on  the right hand side of the previous equation may be expressed as
\setlength\arraycolsep{0.2pt}
\begin{eqnarray}  \label{EqavantApproxquant}
\mathds P\big( W_{t_k} \leq x ; W_{t_{k-1}} \in C_i \big) & = &  \int_{-\infty}^{x}  \Big(\int_{C_i } \mathds P( W_{t_k} \in dx \vert  W_{t_{k-1}} = y) \mathds P({W_{t_{k-1}}} \in dy)  \Big) dx \nonumber \\
& = & \int_{C_i  } \mathcal N(x;y) \mathds P({ W_{t_{k-1}} }\in dy) \\
& \approx & \,  \mathcal N(x; \chi^i_{k-1})   \mathds P\big(W_{t_{k-1}} \in C_i   \big),  \nonumber
\end{eqnarray}
where $\mathcal N(\cdot;y)$ is the cumulative distribution function  of  $W_{t_k}  \vert  W_{t_{k-1}}=y$.   The last quantity  is the  approximation  of (\ref{EqavantApproxquant})  by optimal quantization with one grid point, considering that $\{\chi^i_{k-1} \}$ is  the quantizer of size one of  the random variable $W_{t_{k-1}}$ over the cell $C_i(\chi^{N_{k-1}} )$. 
 \end{proof}

Following the previous approach, the estimations  $\widehat{\pi}_{y,m}$ and  $\widehat{\varpi}_{y,m}$  of   $\pi_{y,m}$ and  $\varpi_{y,m}$ are computed from the following recursive formulae:
\begin{equation}
\left \{
\begin{array}{ll}
\widehat{\pi}_{y,0} = \widehat H_{y,0} \\
\widehat{\pi}_{y,k} = \widehat{\pi}_{y,k-1} \widehat H_{y,k}:=\Big[   \sum_{i=1}^{N_{k-1}} \widehat H_{y,k}^{i,j} \widehat{\pi}_{y,k-1}^{i}  \Big]_{j=1,\dots,N_k},\quad  k=1,\dots,m 
\end{array}
\right.
\end{equation}
where  
$$ \widehat{H}_{y,k}^{ij} = g_{k-1}^{{\bf a}}(x^i_{k-1},y_{k-1};  x^j_k, y_{k})  \,  \hat{p}_k^{ij}, \quad i=1,\dots,d_N; j=1,\dots,d_N; $$
and 
\begin{equation}
\left \{
\begin{array}{ll}
\widehat{\varpi}_{y,0} = \widehat \Upsilon_{y,0} \\
\widehat{\varpi}_{y,k}= \widehat{\varpi}_{y,k-1} \widehat \Upsilon_{y,k}:=\Big[   \sum_{i=1}^{N_{k-1}} \widehat \Upsilon_{y,k}^{i,j} \widehat{\varpi}_{y,k-1}^{i} \Big]_{j=1,\dots,N_k}, k=1,\dots,m 
\end{array}
\right.
\end{equation}
where  
$$ \widehat{\Upsilon}_{y,k}^{ij} = g_{k-1}(x^i_{k-1},y_{k-1};  x^j_k, y_{k})  \,  \hat{p}_k^{ij}, \quad i=1,\dots,N_{k-1}; j=1,\dots,N_k.$$
Then we approximate $\Pi_{y,m}$ by   $\widehat{\Pi}_{y,m}$ given by
$$ \widehat{\Pi}_{y,m}  = \sum_{i=1}^{N_m} \widehat{\Pi}^i_{y,m} \delta_{x_m^i}$$
with
$$  \widehat{\Pi}^i_{y,m}  = \frac{\widehat {\pi}_{y,m}^i}{\sum_{j=1}^{N_m}  \widehat{\varpi}_{y,k-1}^{i}}, \quad i=1,\dots,N_m. $$

\noindent
Recall that  our aim is to estimate $$\Pi_{y,m}  \bar F(t_m,t_n,\cdot) = \frac{\pi_{y,m} \bar F(t_{m},t_n,\cdot)}{\varpi_{y,m} \mbox{\bf 1}},$$ where the function $\bar F(t_m,t_n,\cdot)$ is defined for every $x \geq 0$ by
$$  \bar F(t_m,t_n,x) =   \mathds E \left[ \prod_{k=m}^{n-1} G_{\Delta_k \sigma_k^2}^{\bar{X}_{t_k}, \bar{X}_{t_{k+1}}}({\bf a}) \big\vert\, \bar{X}_{t_m} =x \right].$$
We shall take the estimation:
\begin{equation}  \label{EqPiF(t_n,t_m,)}
 \widehat{\Pi}_{y,m} \bar F(t_m,t_n,\cdot)  = \sum_{i=1}^{N_m}\widehat{\Pi}^i_{y,m} \bar  F(t_m,t_n,x_m^i).  
 \end{equation}
Remark that  the initial  function   $\displaystyle F(s,t,X_s) = \mathbb{P} \left(\inf_{s \leq u \leq t} X_u >a \vert X_s \right)$   (which has been  estimated by $\bar F(s,t,\bar X_s)$) has semi-closed expression  in some specific models, like  in the model  (\ref{DinBS}) in which case it is  given by 
\begin{equation}  \label{PsurvieW}
\mathbb{P} \left(\inf_{s \leq u \leq t} X_u > {\bf a} \vert X_s \right)  =  N (h_1(X_s,t-s)) - \left( \frac{{\bf a}}{X_s}\right)^{\sigma^{-2}(\mu-\sigma^2/2)} N (h_2(X_s,t-s))
\end{equation}
where
\begin{eqnarray*}
h_1(x,u) &=& \frac{1}{\sigma \sqrt{u}} \Big( \log (x/ {\bf a} )  + \big(\mu - \frac{1}{2} \sigma^2\big) u \Big), \\ h_2(x,u) &=& \frac{1}{\sigma \sqrt{u}} \Big( \log ({\bf a} / x)  + \big(\mu - \frac{1}{2} \sigma^2\big) u \Big)
\end{eqnarray*}
and where $N(\cdot)$ is the cumulative distribution function of the standard Gaussian distribution. 

 Except in these specific cases, the function $\bar F(t_m,t_n,\cdot)$  has  to be estimated, and we shall do it by Monte Carlo methods.

\subsection{Estimation of $\bar F(t_m,t_n,\cdot)$ by Monte Carlo}
It remains to estimate the function $\bar F(t_m,t_n,x)$ defined by
$$  \bar F(t_m,t_n,x) =   \mathds E \left[ \prod_{k=m}^{n-1} G_{\Delta_k \sigma_k^2}^{\bar{X}_{t_k}, \bar{X}_{t_{k+1}}}({\bf a}) \big\vert\, \bar{X}_{t_m} =x \right] $$
by Monte Carlo simulations.  The steps of the Monte Carlo procedure are the following.
    \begin{enumerate}
    \item Let us consider regular time discretization steps  $\{t_{m},t_{m+1},  \dots,t_n\}$ over $[t_m,t_n]$ and let $M$ be the number of trials. We simulate for every $j=1,\dots,M$ the  sample path  $(\bar{X}_{t_{k}}^j)_{k=m,\dots,n}$, with $\bar X^j_{t_m}=x$ for every $j$.
    \item Setting 
    $$p_{m,n}^j(x,{\bf a}):= \prod_{k=m}^{n-1} G_{\Delta_k \sigma_k^2}^{\bar{X}^j_{t_{k}}, \bar{X}^j_{t_{k+1}} }({\bf a} ),$$
    we estimate  $\bar F(t_m,t_n,x)$ by 
    \begin{equation} \label{DefFuncEstiMCFPhi}
     \bar F^M(t_m,t_n,x) =  \frac{1}{M} \sum_{j=1}^M p^j_{m,n}(x,{\bf a}).
     \end{equation}
    \end{enumerate}
    Consequently, integrating  both formulae (\ref{EqPiF(t_n,t_m,)}) and (\ref{DefFuncEstiMCFPhi}),   the conditional survival probability $$\Pi_{y,m} \bar F(t_m,t_n,\cdot)$$ will be estimated (for a fixed  trajectory $(y_0,\dots,y_m)$ of the observation process $(Y_{t_0},\dots,Y_{t_m})$) by
    \begin{equation} \label{EqEstimationFinalForm}
  \widehat{\Pi}_{y,m}  \bar F^M(t_m,t_n,\cdot)  = \frac{1}{M} \sum_{i=1}^{N_m}  \sum_{j=1}^{M}   \widehat{\Pi}^i_{y,m}  p^j_{m,n}(x_m^i,{\bf a}).  
\end{equation} 

\begin{rem}  \label{RemMCerror}
It follows from Monte Carlo error analysis  that for every $x \geq 0$,
\begin{equation}  \label{EqMCError}
\big\Vert  \bar F(t_m,t_n,x)  -   \bar F^M(t_m,t_n,x)  \big\Vert_2   = \mathcal O \Big(\frac{1}{\sqrt{M}} \Big).
\end{equation}
\end{rem}


 \subsection{Error analysis}
 We target to  give in this section  the  error bound resulting from  the estimation of 
$$  \mathds P(\tau_{{\bf a}}^{X} >t \vert Y_{t_0},\dots, Y_{t_m}) $$
 by 
 $$  \widehat{\Pi}_{y,m}  \bar{F}^M(t_m,t_n,\cdot)  = \frac{1}{M} \sum_{i=1}^{N_m}  \sum_{j=1}^{M}   \widehat{\Pi}^i_{y,m}  p^j_{m,n}(x_m^i,{\bf a})  .$$
 We shall first give the error bound due to the fact that we compute the survival function of the random variable $\tau_{\bf a}^{\bar X}$ instead of $\tau_{\bf a}^{X}$, and then the error bound due to the simulations procedures.
 To this end,  the following definitions and assumptions are needed.

\begin{defn}  A probability transition $P$ on $E$ is ${\rm C}$-Lipschitz  (with ${\rm C}>0$) if for any Lipschitz function $f$ on $E$ with ratio $[f]_{Lip}$, $Pf$ is Lipschitz with ratio $[Pf]_{Lip} \leq {\rm C} [f]_{Lip}$.  Then, one may define the Lipschitz ratio $[P]_{Lip}$ by
\begin{equation*}
[P]_{Lip} = \sup \Big\{ \frac{[Pf]_{Lip}}{[f]_{Lip}}, f  \textrm{ a  nonzero Lipschitz function }  \Big\} < +\infty .
\end{equation*}
\end{defn}

Remark that that in our framework,   the transition  operators  $P_k(x,dy),\ k=1,\cdots,m$   are Lipschitz,  so  that we set  
$$ [P]_{Lip} := \max_{k=1,\cdots,m}[P_k]_{Lip} <+\infty. $$
We furthermore define some useful quantities which appear in the error bound in the following. 

    \begin{itemize}
    \item [(i)] For every $k=1,\cdots,m$, we set
        $$ {\rm K}^m_g := \max_{k=1,\cdots,m} \Vert  g_{k}  \Vert_{\infty},$$
        where $\Vert  g_{k}  \Vert_{\infty}$ is the supremum norm of the functions $g_k$ defined by (\ref{EqDefFunctg_k}).
    \item[(ii)]  For every $k=1,\cdots,m$, let  $[g^1_{k}]_{Lip}$ and $[g^2_{k}]_{Lip}$ be so that for every $x,x',\widehat{x},\widehat{x}' \in \mathbb R$ and $y,y' \in \mathbb{R}$,
$$  \vert g_{k}^{\bf a}(x,y,x',y') - g_{k}^{\bf a}(\widehat{x},y,\widehat{x}',y') \vert  \leq [g^1_{k}]_{Lip}(y,y')\ \vert x-\widehat{x} \vert + [g^2_{k}]_{Lip}(y,y') \ \vert x'-\widehat{x}' \vert .$$
    \end{itemize}

Let us make now some assumptions which will be used   to compute (see \cite{GobThese})  the convergence rate  of the quantity  $\mathds E \big\vert {1 \! \! 1}_{\{ \tau^{\bar X}>t \}} - {1 \! \! 1}_{\{ \tau>t \}} \big\vert$ towards   $0$. 
\begin{itemize}
\item [(\textbf {H1})] $b$ is a $\mathcal C_b^{\infty}(\mathbb R,\mathbb R)$ function and $\sigma$ is in $\mathcal C_b^{\infty}(\mathbb R,\mathbb R)$.
\item [(\textbf {H2})] there exists $\sigma_0>0$ such that $\forall x \in \mathbb R, \sigma(x)^2 \ge \sigma_0^2  $ (\emph {uniform ellipticity}).

\end{itemize}

Before giving the error bound associated to our estimation we recall  the following useful results.  Consider in this scope  that
$$  \tau^{X} = \inf\{u \geq0, X_u \not\in D  \}$$
where $D=({\bf a},+\infty)$ and $X$ is the signal process. Let $(\bar{X}_{t_k})_{k=0,\dots,m}$ the continuous Euler process taken at discrete times $t_k,  k=0,\dots,m$ and 
$$ \tau^{\bar X}= \inf\{u \geq0, \bar{X}_u \not\in D  \}.$$
We have the following result.

\begin{prop}[see \cite{GobThese}]  \label{PropGob}
Let   $t>0$.  Suppose that Assumptions   (\textbf{H1}) and (\textbf{H2}) are fulfilled. Then,   for every $\eta \in (0,\frac{1}{2}[$   there exists an increasing function $K(T)$ such that for every $t \in [0,T]$ and for every $x \in \mathbb R$,
$$
\mathbb E_x \left[\big\vert  {1 \! \! 1}_{\{\tau^X > t \}}  - {1 \! \! 1}_{\{\tau^{\bar X}> t\}}  \big\vert \right] \le \frac{1}{ n^{\frac{1}{2}-\eta}} \frac{K(T)}{\sqrt{t}},
$$
 where  $n$ is the number of time discretization steps over $[0,t]$. 
\end{prop}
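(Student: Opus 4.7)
The plan is to decompose
$$\mathds E_x\big|{1 \! \! 1}_{\{\tau^X > t\}} - {1 \! \! 1}_{\{\tau^{\bar X} > t\}}\big| = \mathds P_x(\tau^X > t,\, \tau^{\bar X} \le t) + \mathds P_x(\tau^X \le t,\, \tau^{\bar X} > t)$$
and to bound each of these two probabilities by combining (a) the classical strong $L^p$ convergence of the Euler scheme, which under (\textbf{H1}) yields $\big\|\sup_{u \le T}|X_u - \bar X_u|\big\|_p \le C_{p,T}\, n^{-1/2}$ for every $p \ge 1$, with (b) Gaussian-type estimates for the law of $\min_{0 \le u \le t} X_u$ near the barrier $\bf a$, consequences of the uniform ellipticity (\textbf{H2}).

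Concretely, I would fix $\eta \in (0,1/2)$ and a smaller $\eta' \in (0,\eta)$ and introduce the ``good'' event $\Omega_n = \big\{\sup_{u \le T}|X_u - \bar X_u| \le n^{-1/2+\eta'}\big\}$. Chebyshev's inequality applied to the strong-error bound with $p$ large enough makes $\mathds P(\Omega_n^c)$ smaller than any prescribed polynomial rate. On $\Omega_n$, if the two indicators disagree at time $t$ then necessarily $\min_{[0,t]} X_u$ must lie in the strip $({\bf a} - n^{-1/2+\eta'},\, {\bf a} + n^{-1/2+\eta'})$: otherwise both trajectories would either have clearly crossed the barrier before $t$ or stayed uniformly above it, by the uniform closeness on $\Omega_n$. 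The problem is thus reduced to estimating the small-strip probability $\mathds P_x\big(\min_{[0,t]} X_u \in ({\bf a} - \varepsilon,\, {\bf a} + \varepsilon)\big)$ for $\varepsilon = n^{-1/2+\eta'}$.

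The second main step is to prove that this small-strip probability is at most $C\,\varepsilon/\sqrt{t}$. Under (\textbf{H1})--(\textbf{H2}) the process $X$ admits Aronson-type two-sided Gaussian bounds for its transition density, and by a reflection argument (or a Malliavin calculus computation \emph{\`a la} Gobet) one derives an analogous bound for the joint density of the pair $\big(X_t,\, \min_{[0,t]} X_u\big)$. Integrating the marginal density against the indicator of the strip produces the claimed $\varepsilon/\sqrt t$ estimate. Combined with the previous paragraph, this yields the announced rate $K(T)\, n^{-(1/2-\eta)}/\sqrt t$; the gap $\eta - \eta' > 0$ absorbs the logarithmic factor that shows up in a sub-Gaussian refinement of the pathwise strong error (and in the polynomial Chebyshev term $\mathds P(\Omega_n^c)$).

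The main obstacle is the density estimate near the boundary. The decomposition into symmetric-difference events and the application of the strong $L^p$ error are routine; the sharp inequality $\mathds P_x\big(\min_{[0,t]} X \in ({\bf a}-\varepsilon,\, {\bf a}+\varepsilon)\big) \le C\, \varepsilon/\sqrt t$ with the correct $1/\sqrt t$ dependence is the only technically delicate part. It is precisely here that the non-degeneracy (\textbf{H2}) is essential, both to obtain two-sided Gaussian heat kernel bounds and to make the reflection/Malliavin computation non-trivial; the $1/\sqrt t$ factor reflects the Gaussian scale of $X_t - {\bf a}$ near the boundary.
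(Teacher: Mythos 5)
The paper does not actually prove this proposition: it is quoted from Gobet's thesis \cite{GobThese} and no argument is reproduced, so there is no ``paper proof'' to compare against. That said, your sketch is a coherent reconstruction and, to the best of my knowledge, matches the spirit of Gobet's own argument. The two-event split (Euler deviation exceeding $\varepsilon$, versus $\min_{[0,t]}X$ falling in a strip of width $\varepsilon$ around ${\bf a}$) is exactly the right mechanism: on the complement of $\Omega_n$, Markov's inequality applied to $\|\sup_{[0,T]}|X-\bar X|\|_p \le C_{p,T}n^{-1/2}$ with $p$ large gives a term $O(n^{-\eta' p})$ which, after absorbing $\sqrt T$ into $K(T)$, can be dominated by $K(T)\,n^{-1/2+\eta}/\sqrt t$; and on $\Omega_n$ disagreement of the indicators does force $\min_{[0,t]}X$ into $({\bf a}-\varepsilon,{\bf a}+\varepsilon]$ as you argue. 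The one point worth stressing, which you rightly single out as the hard step, is the bound $\mathds P_x\big(\min_{[0,t]}X \in ({\bf a}-\varepsilon,{\bf a}+\varepsilon)\big)\le C\varepsilon/\sqrt t$, uniformly in $x$, which under (\textbf{H1})--(\textbf{H2}) follows from a density estimate for the running minimum (via Aronson bounds plus a reflection or Malliavin argument, or a Lamperti--Girsanov reduction to the Brownian case, where the density of the minimum is explicitly bounded by $\sqrt{2/(\pi t)}$); without this $1/\sqrt t$ factor the statement cannot be recovered. Taking $\eta'<\eta$, as you do, provides the safety margin needed to swallow any logarithmic correction in the pathwise strong-error estimate. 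So the proposal is sound in outline; just be aware it is reproving a cited classical result rather than an argument the paper carries out.
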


 The convergence rate of the filter approximation  is given by the following theorem.  Since  we do not know the convergence rate and some properties as the stationary property  of  the marginals of the functional quantization we  consider  here that  for every $k$,   $\widehat{X}_{t_k}^{N_k}$ denotes  the marginal quantization  of  $(X_{t_k})$ of size $N_k$, obtained  from the marginal vector quantization method (see \cite{PagPha}).  

\begin{thm} \label{ThmConvergence}  (see \cite{PagPha} for a similar result).  We have for every $p \ge 1$,
\begin{eqnarray*}
\vert   \Pi_{y,m} \bar F(s,t,\cdot) - \widehat{\Pi}_{y,m} \bar F (s,t,\cdot)   \vert  &  \leq & \frac{{\rm K}^m_g }{\phi_m(y) \vee  \hat{\phi}_m(y)}  \sum_{k=0}^m {\rm B}_k^m(\bar F(s,t,\cdot),y,p) \ \Vert \bar{X}_{t_k} - \widehat{X}^{N_k}_{t_k}  \Vert_p  \\
\end{eqnarray*}
where 
$$ \phi_{m}(y) := \pi_{y,m} \mbox{\bf 1},  \quad \widehat{\phi}_{m}(y) := \widehat{\pi}_{y,m}  \mbox{\bf 1} $$  and  where
\begin{eqnarray*}
{\rm B}_k^m(f,y,p) & := &  (2-\delta_{2,p}) [P]_{Lip}^{m-k} [f]_{Lip} + 2 \bigg( \frac{\Vert f\Vert_{\infty}}{K_g^m} \big([g_{k+1}^1]_{Lip}(y_k,y_{k+1}) +[g_{k}^2]_{Lip}(y_{k-1},y_{k}) \big) \\
& + &  (2-\delta_{2,p})  \frac{\Vert f \Vert_{\infty}}{K_g^m} \sum_{j=k+1}^m  [P]_{Lip}^{j-k-1} \big([g_{j}^1]_{Lip}(y_{j-1},y_{j}) +[P]_{Lip} [g_{j}^2]_{Lip}(y_{j-1},y_{j}) \big) \bigg)
\end{eqnarray*}
(with the convention that  $g_0 = g_{m+1} \equiv 0$ and $\delta_{n,p}$ is the usual Kronecker symbol).
\end{thm}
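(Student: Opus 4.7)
The plan is to follow the approach of Pag\`es--Pham, adapting it to our quantized-kernel setting. I would first decompose the ratio error and reduce it to the unnormalized quantities $\pi_{y,m} f$ and $\varpi_{y,m} \mbox{\bf 1}$, then use the recursive structure $\pi_{y,k} = \pi_{y,k-1} H_{y,k}$ (and its quantized analogue $\widehat\pi_{y,k} = \widehat\pi_{y,k-1}\widehat H_{y,k}$) to localise the error one step at a time, and finally propagate Lipschitz constants through the composed kernels $H_{y,k+1} \circ \cdots \circ H_{y,m}$.

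\textbf{Step 1 (ratio decomposition).} Writing $\Pi_{y,m} f = \pi_{y,m} f / \varpi_{y,m} \mbox{\bf 1}$ and likewise for $\widehat\Pi$, the elementary identity
\[
\frac{a}{b} - \frac{a'}{b'} = \frac{a - a'}{b'} - \frac{a'}{b'} \cdot \frac{b - b'}{b}
\]
combined with $|\pi_{y,m} f| \le \|f\|_\infty \,\varpi_{y,m}\mbox{\bf 1}$ (which holds because $G \le 1$ so $g_k^{\bf a} \le g_k$), together with the symmetric version of this identity (swap the roles of $(a,b)$ and $(a',b')$), yields
\[
|\Pi_{y,m} f - \widehat\Pi_{y,m} f| \le \frac{1}{\phi_m(y) \vee \widehat\phi_m(y)} \Big(|\pi_{y,m} f - \widehat\pi_{y,m} f| + \|f\|_\infty \,|\varpi_{y,m}\mbox{\bf 1} - \widehat\varpi_{y,m}\mbox{\bf 1}|\Big).
\]
Thus it suffices to bound $|\pi_{y,m} f - \widehat\pi_{y,m} f|$ for a generic bounded measurable $f$, the $\varpi$-estimate being the special case where the $G$-factors (equivalently $K^m$) are suppressed.

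\textbf{Steps 2--3 (telescoping and Lipschitz propagation).} Setting $R_k f := H_{y,k+1} \circ \cdots \circ H_{y,m} f$ with $R_m f := f$, I telescope
\[
\pi_{y,m} f - \widehat\pi_{y,m} f = \sum_{k=0}^{m} \widehat\pi_{y,k-1}\big((H_{y,k} - \widehat H_{y,k}) R_k f\big),
\]
so that at step $k$ only one true kernel is replaced by its quantized counterpart. The $L^p$ norm of $(H_{y,k} - \widehat H_{y,k}) R_k f$ is controlled using the Lipschitz constants of $g^{\bf a}_k$ in its first and third arguments times $\|\bar X_{t_k} - \widehat X^{N_k}_{t_k}\|_p$, the Lipschitz constant of $R_k f$, and $\|R_k f\|_\infty$; the factor $(2 - \delta_{2,p})$ arises from using Cauchy--Schwarz for $p=2$ and H\"older's inequality for $p > 2$. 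To bound $[R_k f]_{Lip}$, I would iterate the one-step estimate
\[
[H_{y,j} h]_{Lip} \le K_g^m [P]_{Lip} [h]_{Lip} + \|h\|_\infty \big([g_j^1]_{Lip}(y_{j-1},y_j) + [P]_{Lip}[g_j^2]_{Lip}(y_{j-1},y_j)\big),
\]
together with $\|H_{y,j} h\|_\infty \le K_g^m \|h\|_\infty$, unrolled from $j = m$ down to $j = k+1$. This produces the $[P]_{Lip}^{m-k}[f]_{Lip}$ contribution and the sum $\sum_{j=k+1}^{m} [P]_{Lip}^{j-k-1}\big([g_j^1]_{Lip} + [P]_{Lip}[g_j^2]_{Lip}\big)$ appearing in $B_k^m(f,y,p)$; the overall prefactor $K^m_g$ that I have pulled out in front of the sum collects the single extra $K^m_g$ generated at step $k$ itself by the sup-norm bound on $g^{\bf a}_k$.

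\textbf{Main obstacle.} I expect the delicate point to be the Lipschitz bookkeeping in Step 3: the two variables of $g^{\bf a}_j$ do not play symmetric roles (the first interacts with the transition kernel $P_j$ through the $x$-dependence of $P_j(x,\cdot)$, while the second appears directly under the integral), and this asymmetry is precisely what produces the extra $[P]_{Lip}$ factor in front of $[g_j^2]_{Lip}$ inside the sum defining $B_k^m$. Once the one-step recursion for $[H_{y,j} h]_{Lip}$ is established and the extra factor $2$ coming from the two occurrences of $g^{\bf a}$ adjacent to $\bar X_{t_k}$ (one in $g_k^{\bf a}(\cdot, \cdot\,; \bar X_{t_k}, \cdot)$ and one in $g_{k+1}^{\bf a}(\bar X_{t_k}, \cdot\,; \cdot, \cdot)$) is absorbed into the definition of $B_k^m$, the final assembly is routine.
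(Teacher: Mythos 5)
Your proposal reconstructs exactly the Pag\`es--Pham machinery that the paper invokes without re-proving: the printed proof is the one-liner ``The proof is similar to the proof of Theorem 3.1 in \cite{PagPha},'' and your ratio decomposition, kernel-by-kernel telescoping $\pi_{y,m}f-\widehat{\pi}_{y,m}f=\sum_k \widehat{\pi}_{y,k-1}\big((H_{y,k}-\widehat{H}_{y,k})R_kf\big)$, and one-step Lipschitz propagation through $H_{y,j}$ (with the asymmetric treatment of $[g_j^1]_{Lip}$ and $[g_j^2]_{Lip}$ coming from $x$ entering both $g_j^{\bf a}$ and $P_j(x,\cdot)$) are precisely that argument. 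The only place requiring extra care when filling in all details is the accounting of the ${\rm K}^m_g$ powers against the $\phi_m$ normalisation, but this bookkeeping is inherited verbatim from \cite{PagPha} rather than being a gap in your sketch.
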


\begin{proof}[$\textbf{Proof}$]  The proof is similar to the proof of  Theorem $3.1$ in \cite{PagPha}.

\end{proof}
\begin{rem}  One may  remark that  the function  ${\rm B}_k^m(f,y,p)$ involves the norm $\Vert  f \Vert_{\infty}$ which for $f = \bar  F(s,t,\cdot)$ is bounded by $1$.
\end{rem}

Let  us give now the error  bounds induced from the approximation of $\mathds P(\tau_{\bf a}^X>t_n \vert  Y_{t_0},\dots,Y_{t_m})$ by   $ \widehat{\Pi}_{y,m}  \bar{F}^M(t_m,t_n,\cdot) $.

\begin{thm} \label{ThmErrorBounds} (See \cite{CalSag} for a similar result)
Suppose   that the coefficients $b$ and $\sigma$ of the continuous signal process $X$ are such that Assumptions    \textbf{(H1)} and  \textbf{(H2)} are satisfied and let $\eta \in (0,\frac{1}{2}]$. Then,  
\begin{eqnarray*}
\Big \vert \mathds P(\tau_{\bf a}^X>t_n \vert  Y_{t_0},\dots,Y_{t_m}) &  - &      \frac{1}{M} \sum_{i=1}^{N_m}  \sum_{j=1}^{M}   \widehat{\Pi}^i_{y,m}  p^j_{m,n}(x_m^i,{\bf a})   \Big \vert     \leq   \mathcal O \Big(n^{-\frac{1}{2}+\eta} \Big)  + \mathcal O  \Big( M^{-\frac{1}{2}} \Big)  \\ 
& + &  \frac{{\rm K}^m_g }{\phi_m(y) \vee  \hat{\phi}_m(y)}  \sum_{k=0}^m {\rm B}_k^m(\bar F(s,t,\cdot),y,p) \ \Vert \bar{X}_{t_k} - \widehat{X}^{x^{N_k}}_{t_k}  \Vert_p,
\end{eqnarray*}
where the functions $\phi_{m}, \widehat{\phi}_{m}, {\rm B}_k^m(\cdot,y,p)$ are introduced in Theorem \ref{ThmConvergence}.
\end{thm}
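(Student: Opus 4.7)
The plan is to insert two intermediate quantities and bound each piece of the resulting telescoping sum by one of the three tools we have already developed. Concretely, I would write
\begin{align*}
&\Big|\mathds P(\tau^X_{\bf a} > t_n \mid Y^m) - \widehat{\Pi}_{y,m}\bar F^M(t_m,t_n,\cdot)\Big| \\
&\qquad\leq \underbrace{\Big|\mathds P(\tau^X_{\bf a} > t_n \mid Y^m) - \mathds P(\tau^{\bar X}_{\bf a} > t_n \mid Y^m)\Big|}_{(A):\text{ Euler scheme}} \\
&\qquad\quad + \underbrace{\Big|\Pi_{y,m}\bar F(t_m,t_n,\cdot) - \widehat{\Pi}_{y,m}\bar F(t_m,t_n,\cdot)\Big|}_{(B):\text{ quantization of the filter}} \\
&\qquad\quad + \underbrace{\Big|\widehat{\Pi}_{y,m}\bar F(t_m,t_n,\cdot) - \widehat{\Pi}_{y,m}\bar F^M(t_m,t_n,\cdot)\Big|}_{(C):\text{ Monte Carlo}},
\end{align*}
using the identity $\mathds P(\tau^{\bar X}_{\bf a}>t_n\mid Y^m)=\Pi_{y,m}\bar F(t_m,t_n,\cdot)$ from Theorem \ref{ThmMainResultPhiNLF}.

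For term $(A)$, I would write the difference as $\mathds E[{\bf 1}_{\{\tau^X_{\bf a}>t_n\}}-{\bf 1}_{\{\tau^{\bar X}_{\bf a}>t_n\}}\mid \mathcal F^Y_s]$ and use that conditional expectation is a contraction in $L^1$, so the term is bounded above by $\mathds E|{\bf 1}_{\{\tau^X_{\bf a}>t_n\}}-{\bf 1}_{\{\tau^{\bar X}_{\bf a}>t_n\}}|$. Proposition \ref{PropGob} under assumptions \textbf{(H1)}--\textbf{(H2)} then yields the $\mathcal O(n^{-1/2+\eta})$ bound.

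For term $(B)$, I would apply Theorem \ref{ThmConvergence} directly with $f=\bar F(s,t,\cdot)$, which is bounded by $1$; this gives the sum involving $[P]_{Lip}$, $K^m_g$, the Lipschitz constants $[g_k^{1,2}]_{Lip}$ and the marginal quantization errors $\|\bar X_{t_k}-\widehat X^{x^{N_k}}_{t_k}\|_p$.

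For term $(C)$, I would use that $\widehat{\Pi}_{y,m}$ is a sub-probability measure, so that
\[
\Big|\widehat{\Pi}_{y,m}\bar F(t_m,t_n,\cdot) - \widehat{\Pi}_{y,m}\bar F^M(t_m,t_n,\cdot)\Big| \leq \max_{i=1,\dots,N_m}\bigl|\bar F(t_m,t_n,x^i_m)-\bar F^M(t_m,t_n,x^i_m)\bigr|,
\]
and invoke Remark \ref{RemMCerror} to get the $\mathcal O(M^{-1/2})$ contribution. Combining the three estimates yields the announced bound.

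The main obstacle I expect is making the contraction argument for $(A)$ rigorous in the present conditional setting. Strictly speaking, Proposition \ref{PropGob} is stated for the unconditional expectation, and one must be careful because $\bar X$ is driven by the \emph{same} Brownian motion $W$ that also enters the dynamics of $Y$. However, since the bound in Proposition \ref{PropGob} holds $\mathds P$-a.s.~after taking expectation over $W$, and since $\mathcal F^Y_s\subset\sigma(W,\widetilde W)$, the Jensen-type contraction $\mathds E[|\mathds E[Z\mid\mathcal F^Y_s]|]\le \mathds E|Z|$ is enough to transfer the rate to the conditional quantity without further work, at the cost of stating the bound in $L^1(\Omega)$ (rather than pointwise in $y$). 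Apart from this point, the proof is essentially a bookkeeping exercise using the tools assembled in the preceding sections.
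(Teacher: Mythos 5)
Your proposal follows exactly the paper's own proof: the identical three-term decomposition (Euler scheme error, filter quantization error, Monte Carlo error), with each piece bounded by Proposition \ref{PropGob}, Theorem \ref{ThmConvergence}, and Remark \ref{RemMCerror} together with $\sum_i \widehat{\Pi}^i_{y,m}\le 1$, respectively. Your observation about term $(A)$ is well taken and matches the paper precisely, since the paper also only controls that first difference after taking an outer expectation, so its bound is in $L^1(\Omega)$ even though the theorem's display is phrased as if it were pointwise in $y$.
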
 

\begin{proof}[$\textbf{Proof}$]  We have 
\begin{eqnarray} \label{EqDecomposError}
\Big \vert \mathds P(\tau_{\bf a}^X>t_n \vert  Y_{t_0},\dots,Y_{t_m}) &  - &      \frac{1}{M} \sum_{i=1}^{N_m}  \sum_{j=1}^{M}   \widehat{\Pi}^i_{y,m}  p^j_{m,n}(x_m^i,{\bf a})   \Big \vert   \nonumber  \\ 
&  \leq & \big \vert \mathds P(\tau_{\bf a}^X>t_n \vert  Y_{t_0},\dots,Y_{t_m}) -  \mathds P(\tau_{\bf a}^{\bar X}>t_n \vert  Y_{t_0},\dots,Y_{t_m}) \big \vert  \nonumber  \\
& &  + \ \big \vert   \Pi_{y,m} \bar F(t_m,t_n,\cdot) - \widehat{\Pi}_{y,m} \bar F (t_m,t_n,\cdot) \big \vert   \nonumber \\
&  & +\  \big\vert  \widehat{\Pi}_{y,m} \bar F (t_m,t_n,\cdot)  -   \widehat{\Pi}_{y,m}  \bar F^M(t_m,t_n,\cdot)  \big \vert. 
\end{eqnarray}
Owing to Remark \ref{RemMCerror} and to the fact that $ \sum_{i=1}^{N_m}  \widehat{\Pi}^i_{y,m} \leq 1$ we get  
\begin{eqnarray*}
\big \vert  \widehat{\Pi}_{y,m} \bar F (s,t,\cdot)  -    \widehat{\Pi}_{y,m}  \bar F^M(t_m,t_n,\cdot)    \big  \vert  & = &    \big\vert \sum_{i=1}^{N_m}  \widehat{\Pi}^i_{y,m} \bar F (t_m,t_n, x_m^i) -  \sum_{i=1}^{N_m}  \widehat{\Pi}^i_{y,m}  \bar F^M (t_m,t_n, x_m^i)  \big\vert \\
& \leq &  \sum_{i=1}^{N_m}  \widehat{\Pi}^i_{y,m} \sup_{x \geq 0} \vert  \bar F (t_m,t_n, x) - \bar F^M (t_m,t_n, x)\vert \\
& \leq & \sup_{x \geq 0} \vert   \bar F (t_m,t_n, x) - \bar F^M (t_m,t_n, x)\vert \\
& = &  \mathcal O\big( \frac{1}{\sqrt{M}}\big).
\end{eqnarray*}
On the other hand, the error  bound of the term  $\big \vert   \Pi_{y,m} \bar F(t_m,t_n,\cdot) - \widehat{\Pi}_{y,m} \bar F (t_m,t_n,\cdot) \big \vert $   is given by Theorem \ref{ThmConvergence}.  \\
\noindent
Now, let us consider the first term of the right hand side of Equation (\ref{EqDecomposError}). We have:
\begin{eqnarray*}
\mathds E\Big[\big \vert \mathds P(\tau_{\bf a}^X>t_n \vert  Y_{t_0},\dots,Y_{t_m})  & - &  \mathds P(\tau_{\bf a}^{\bar X}>t_n \vert  Y_{t_0},\dots,Y_{t_m}) \big \vert \Big]\nonumber \\
 &  = & \mathds E \left[ \big\vert    \mathds E\left[ {1 \! \! 1}_{\{\tau_{\bf a}^X >t_n\}} - {1 \! \! 1}_{\{\tau_{\bf a}^{\bar X}>t_n \}} \vert Y_{t_0},\ldots,Y_{t_m}   \right]  \big\vert \right] \nonumber \\
 & \leq & \mathds E \left[\big\vert    {1 \! \! 1}_{\{\tau_{\bf a}^X >t_n\}} - {1 \! \! 1}_{\{\tau_{\bf a}^{\bar X}>t_n \}}  \big\vert  \right]\nonumber \\
&  = &  \mathcal O(n^{-\frac{1}{2} +\eta}),
\end{eqnarray*}
the last statement following from Proposition \ref{PropGob}.

\end{proof}

 \subsection{Numerical examples}
We deal with numerical simulations in this section by considering  two example of models. \\
In both examples  we fix $t_m=1$ and, given a (simulated) trajectory of the observation process $Y$ from $0$ to $t_m$, we estimate the conditional cumulative function $\mathds P(\tau_{\bf a}^{\bar X}>t_n \vert Y_{t_0},\ldots,Y_{t_m})$  using  formula (\ref{EqEstimationFinalForm}), for  $t_n$ varying $0.1$ by $0.1$ from $1.1$ to $11$ (where the time unit is expressed  in years). Furthermore, we set the number $m$ of discretization points over $[0,t_m]$ equal to $50$ and for every $k =1, \cdots,m$,  the quantization grid size $N_k$ is set to $966$ (as a consequence of the numerical solution of the Problem \ref{EquatProbBit} for $N=10000$, with the optimal decomposition $(23,7,3,2)$, see \cite{PagPri} for more detail), with $N_0 =1$. All the programs have been coded using the {\bf C} language  on a CPU  $2.7$  GHz  and   4 Go memory computer.

\begin{example}[The ``Black-Scholes'' example] \ \\
 The  first model is the one considered in Example \ref{exa:BS} and Corollary \ref{CorExampleDinBS}  where the dynamics of the signal  process $X$ and the observation process $Y$ are given by
 \begin{equation} \label{ModelBlackScholes}
\begin{cases}
dX_t  =  X_t ( \mu dt  +  \sigma  dW_t), &  X_0 = x_0, \\
d Y_t  =  Y_t (\mu dt + \sigma d W_t + \delta d \widetilde  W_t), & Y_0=y_0
\end{cases}
\end{equation}
or  equivalently $$ \frac{dY_t}{Y_t} = \frac{dX_t}{X_t} + \delta d\widetilde{W}_t.$$

 We choose the following parameters (as in \cite{CocGemJea}) of the model: $\mu = 0.03$, $\sigma=0.03$, $x_0=y_0=86.3$ and ${\bf a} = 76$. The numerical results are depicted  in Figure \ref{figure1} and  Figure \ref{figure2}.   In Figure \ref{figure1}, we draw three trajectories of the observation process $Y$ for the same $\delta =0.1$ (left side graphic) and the corresponding cumulative functions $\mathds P(\tau^{\bar X}_{\bf a} \leq t_n \vert Y_{t_0},\ldots,Y_{t_m})$ on the right hand side graphics, $t_m=1$ and $t_n \in [1.1,11]$, in years.  We remark that, for a fixed time $t_n$,  the lower the trajectory is, the higher its probability is to hit the barrier.  

 The  left hand side graphics  of Figure \ref{figure2} corresponds to three trajectories of the observation process $Y$ for $\delta \in \{0.1,0.3,0.5  \}$ and the right hand side graphics, to (a zoom of) the corresponding  cumulative functions $\mathds P(\tau^{\bar X}_{\bf a} \leq t_n \vert Y_{t_0},\ldots,Y_{t_m})$, with $t_m=1$ and $t_n \in [1.1,11]$, in years. We observe in this example that the noisier the observations are, the higher the probability is to hit the barrier ${\bf a}$ before a fixed time $t_n$.  
 
 Note that in both examples, the function $ F(t_m,t_n,\cdot)$ has been computed using formula (\ref{PsurvieW}) and the computation time to get one cumulative function $\mathds P(\tau^{\bar X}_{\bf a} \leq t_n \vert Y_{t_0},\ldots,Y_{t_m})$ for a given $t_n$ is about of $24 $ seconds.

\begin{figure}[htpb]
\begin{center}
   \includegraphics[width=7.7cm,height=6.0cm]{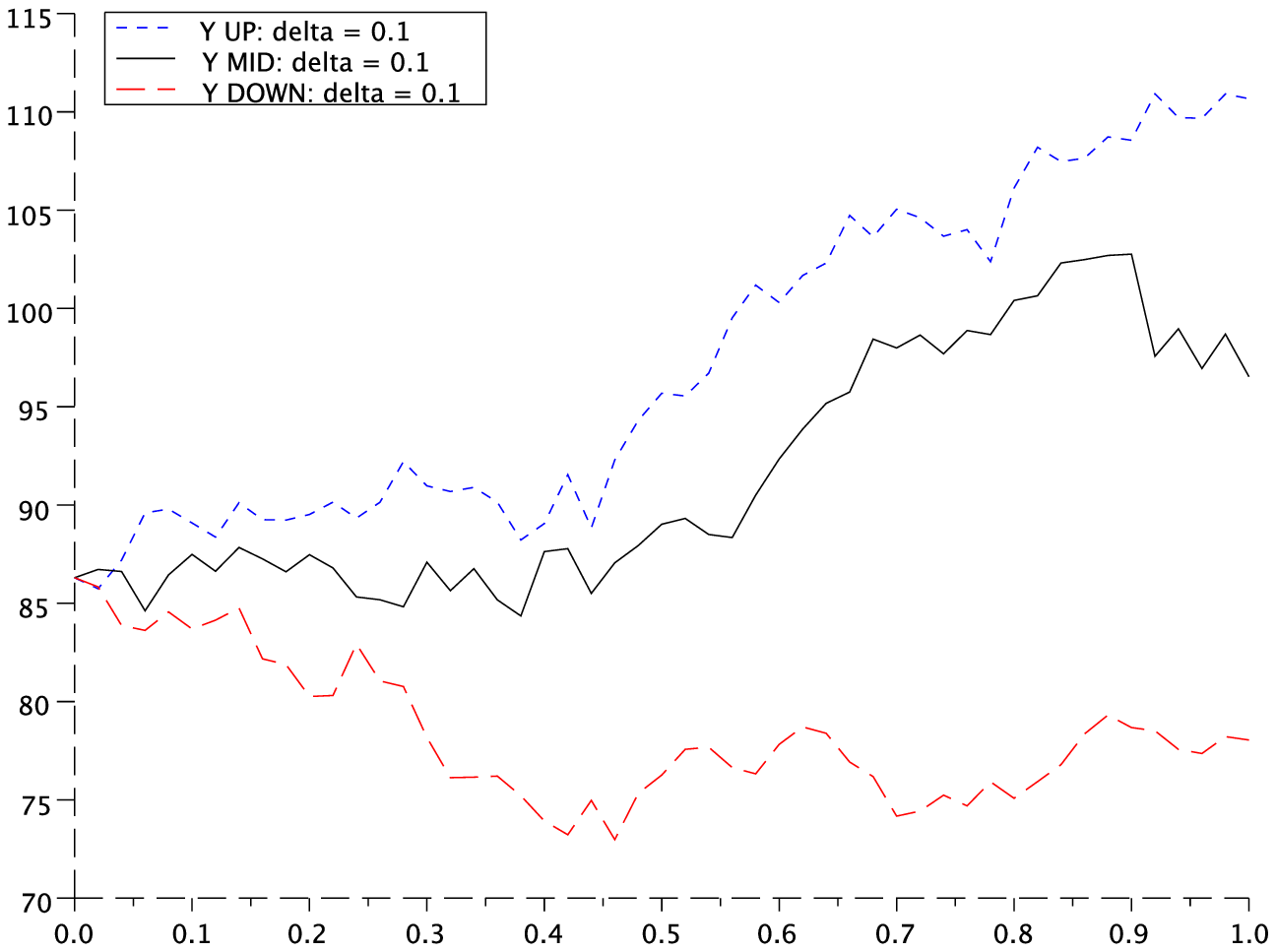}	
 \hfill    \includegraphics[width=7.7cm,height=6.0cm]{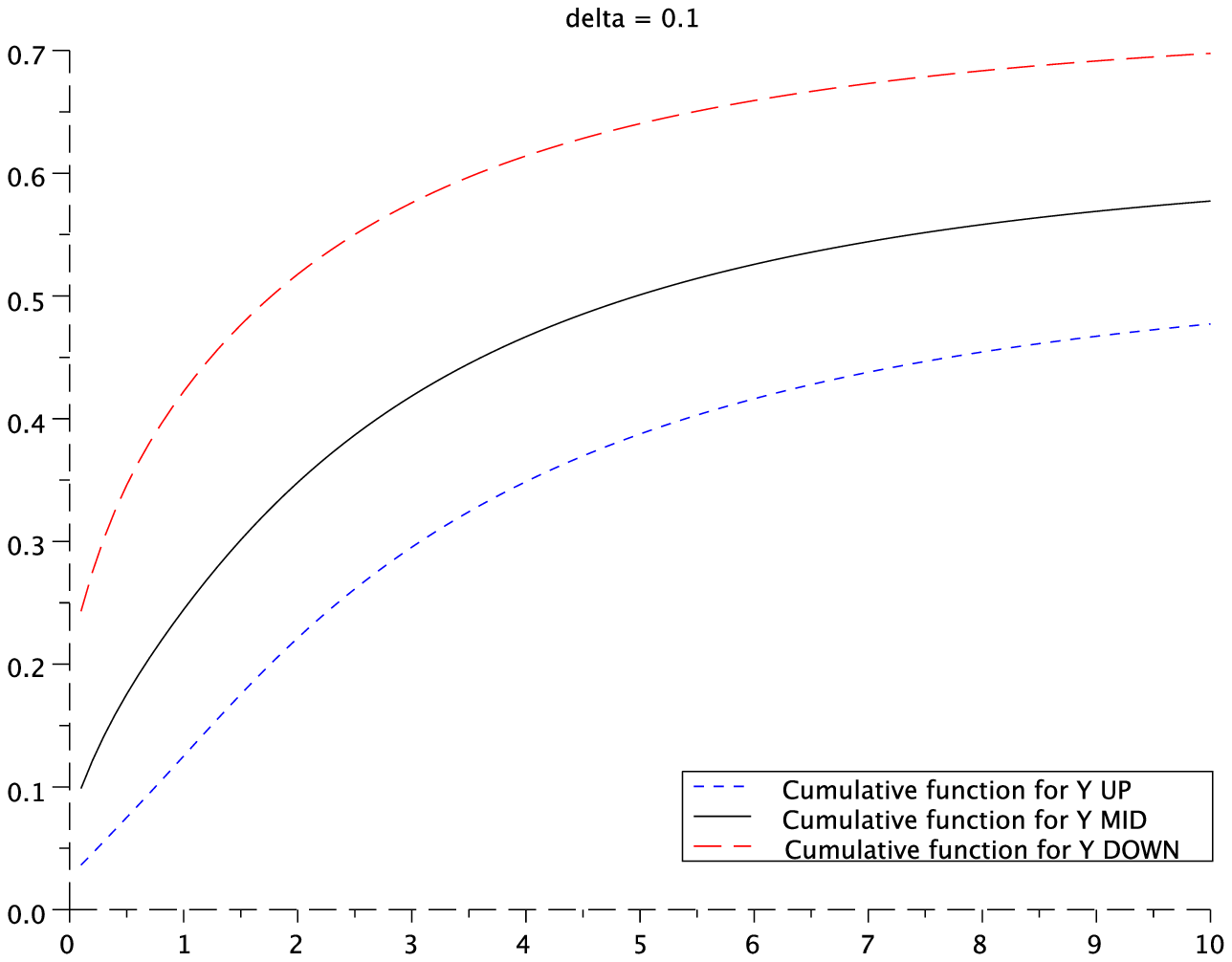}
  \caption{\footnotesize  ("Black Scholes example") Three trajectories for the observation process $Y$ and for  $\delta = 0.1$ (on the left), and the corresponding cumulative functions $\mathds P(\tau^{\bar X}_{\bf a} \leq t_n \vert Y_{t_0},\ldots,Y_{t_m})$ with $t_m=1$ and $t_n \in [1.1,11]$ years (on the right).}
\label{figure1}
  \end{center}
\end{figure}

 \begin{figure}[htpb]
   \includegraphics[width=7.7cm,height=6.0cm]{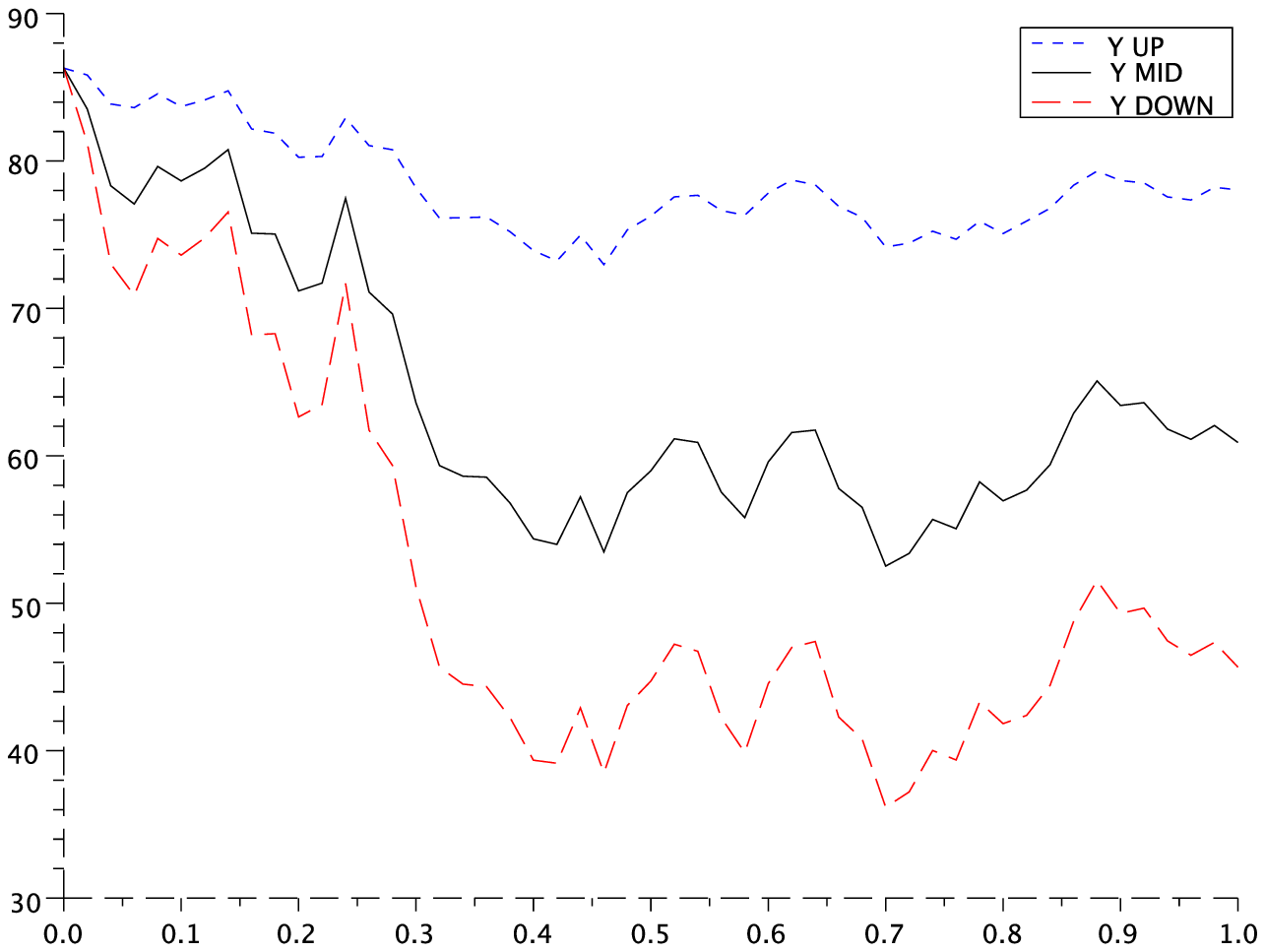}	
  \hfill    \includegraphics[width=7.7cm,height=6.0cm]{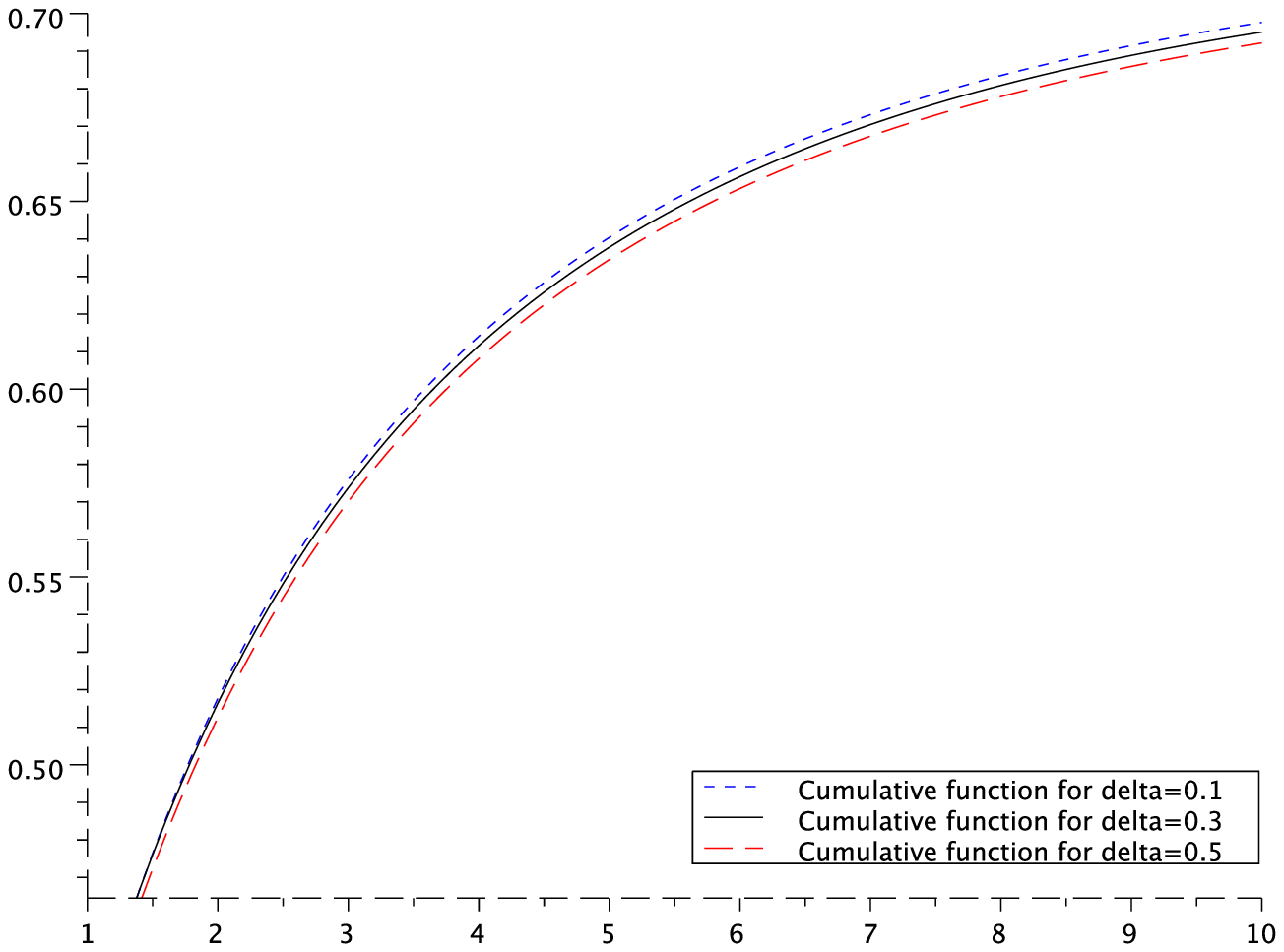}
  \caption{\footnotesize ("Black Scholes example")  Left graphics  corresponds to three trajectories for the observation process $Y$, for  $\delta= 0.1$ ({\tt  Y UP}), $\delta = 0.3$ ({\tt  Y MID}), $\delta = 0.5$ ({\tt  Y DOWN}), and the right hand side graphics  correspond to a zoom of the cumulative functions $\mathds P(\tau^{\bar X}_{\bf a } \leq t_n \vert Y_{t_0},\ldots,Y_{t_m})$ with $t_m=1$ and $t_n \in [1.1,11]$ years.} 
\label{figure2}
\end{figure}

 \end{example}

\begin{example}[The Ornstein-Uhlenbeck example]\ \\
In the second model, we suppose that both the signal and the observation process evolve following the Ornstein-Uhlenbeck dynamics:
\begin{equation}
\begin{cases}
dX_t  =  \lambda (\theta- X_t)  dt  +  \sigma  dW_t, &  X_0 = x_0, \\
d Y_t  =  \lambda ( \theta - Y_t)  dt + \sigma d W_t + \delta d \widetilde  W_t, & Y_0=y_0
\end{cases}
\end{equation}
or  setting $Z_t = Y_t - X_t$
$$ dZ_t = - \lambda Z_t dt  + \delta d\widetilde W_t, $$
meaning that $Z$ is still an  Ornstein-Uhlenbeck process with mean value $\theta=0$ and with volatility $\delta.$ The parameters are chosen as follows: $\lambda = 0.18$, $\theta = 0.35$, $\sigma =0.12$, $x_0=y_0=0.35$ (as in \cite{CocGemJea}) and ${\bf a} =0.2$.   The numerical results are represented in Figure \ref{figure3} where we depict  three trajectories of the observation process $Y$ for $\delta = 0.16$ (left hand side graphics of Figure \ref{figure3}) and  the associated  cumulative functions $\mathds P(\tau^{\bar X}_{\bf a} \leq t_n \vert Y_{t_0},\ldots,Y_{t_m})$, with $t_m=1$ and $t_n \in [1.1,6]$ in years (right hand side graphics of Figure \ref{figure3}).  Once again, we remark that, as in the "Black Scholes example",  for a fixed time $t_n$,   the lower the trajectory is, the higher its probability is to hit the barrier.

In this example,  the function $\bar F(t_m,t_n,\cdot)$ has been computed using Monte Carlo simulations  of size $M=10^5$(see the formula (\ref{DefFuncEstiMCFPhi})) with $50$ discretization steps over $[t_m,t_n]$. The computation time to get one cumulative function $\mathds P(\tau^{\bar X}_{\bf a} \leq t_n \vert Y_{t_0},\ldots,Y_{t_m})$ for a given $t_n$ is about  $6.5$ minutes.

 \begin{figure}[htpb]
   \includegraphics[width=7.7cm,height=6.0cm]{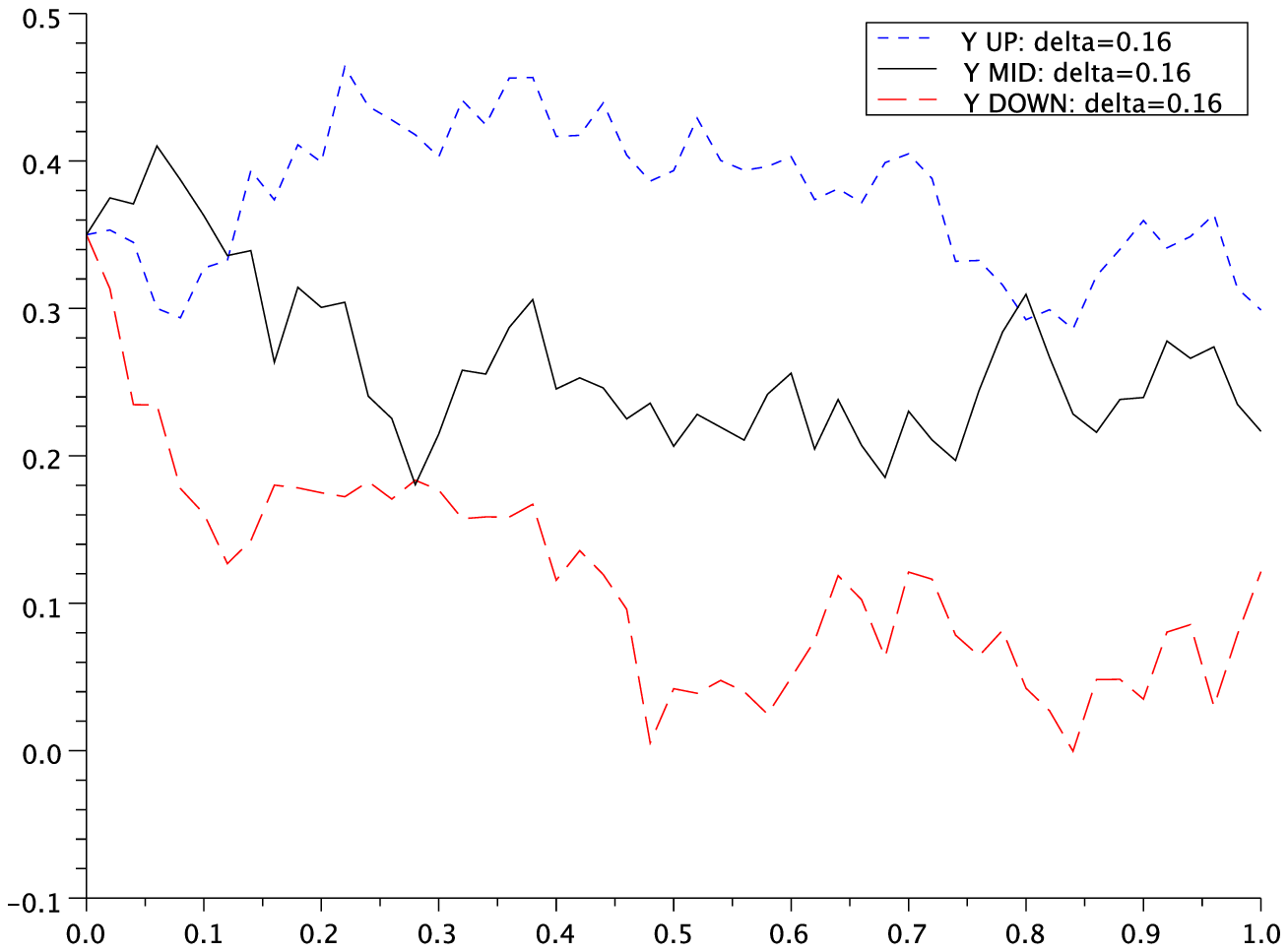}	
  \hfill    \includegraphics[width=7.7cm,height=6.0cm]{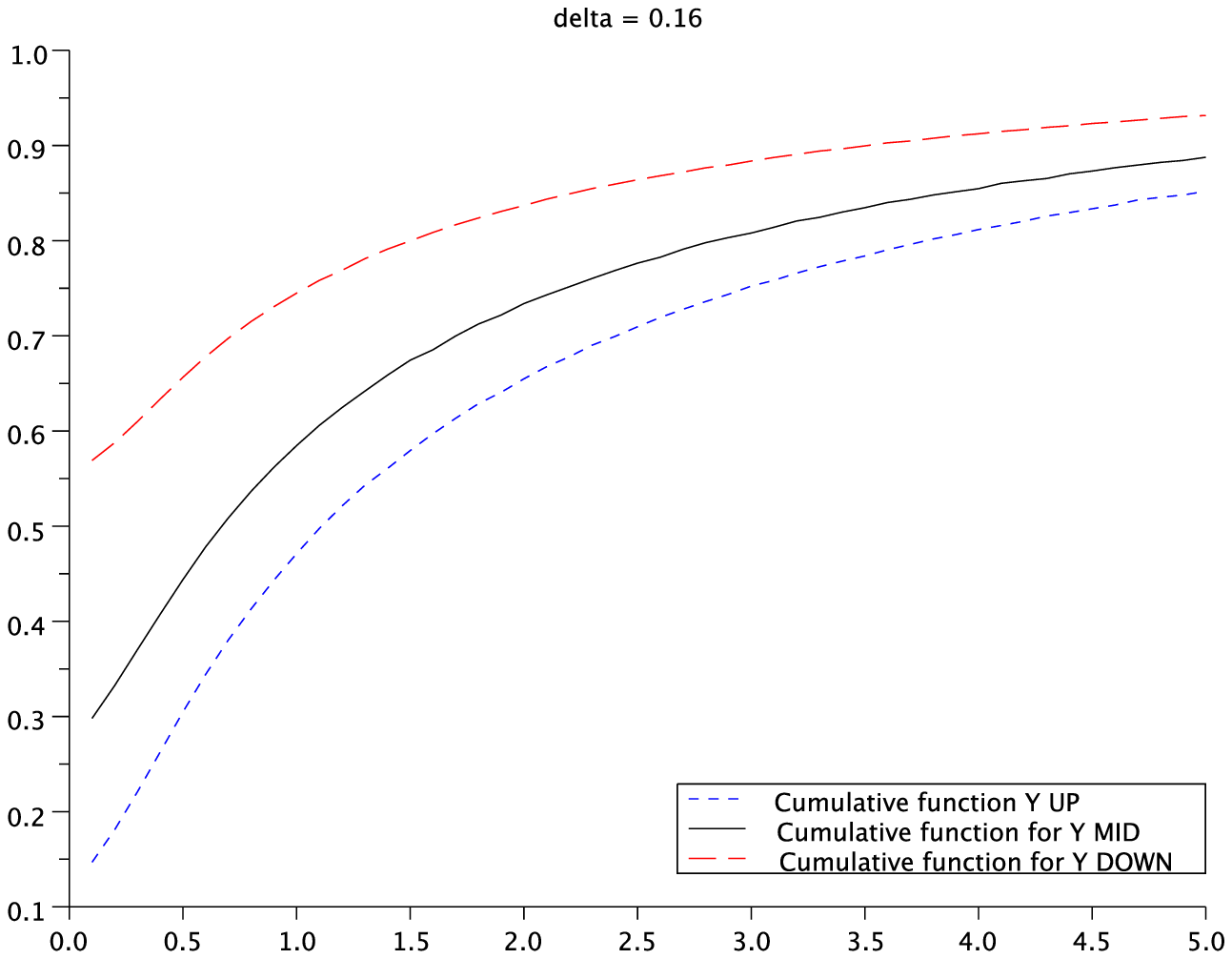}
  \caption{\footnotesize  ("Ornstein-Uhlenbeck example") Left graphics  correspond to three trajectories for the observation process $Y$, for  $\delta= 0.16$   and the right hand side graphics  correspond to  the cumulative functions $\mathds P(\tau^{\bar X}_{\bf a} \leq t_n \vert Y_{t_0},\ldots,Y_{t_m})$ with $t_m=1$ and $t_n \in [1.1,6]$ years.} 
\label{figure3}
\end{figure}

 \end{example}

\newpage

\end{document}